\newcommand{\Zz}{\mathbb{Z}}
\newcommand{\Qq}{\mathbb{Q}}
\theoremstyle{plain}
\newtheorem{theorem}{Theorem}[section]    
\newtheorem{twisting lemma}[theorem]{Twisting lemma}
\newtheorem{lemma}[theorem]{Lemma}       
\newtheorem{proposition}[theorem]{Proposition}  
\newtheorem{question}[theorem]{Question}
\theoremstyle{remark}
\newtheorem{definition}[theorem]{Definition}      
\newtheorem{remark}[theorem]{Remark}   
\def\cm{\hbox{\hbox{\rm C}\kern-5pt{\raise 1pt\hbox{$|$}}}}
\def\lhfl#1#2{\smash{\mathop{\hbox to 12mm{\leftarrowfill}}
\limits^{#1}_{#2}}}
\def\rhfl#1#2{\smash{\mathop{\hbox to 12mm{\rightarrowfill}}
\limits^{#1}_{#2}}}
\def\build#1_#2^#3{\mathrel{
\mathop{\kern 0pt#1}\limits_{#2}^{#3}}}
\def\htrait#1#2{\smash{\mathop{\hbox to 12mm{\hrulefill}}
\limits^{#1}_{#2}}}
\def\sxbullet{{\raise 2pt\hbox{\bf .}}}
\numberwithin{equation}{section}
\begin{document}

\title{On parametric extensions over number fields}

\author{Fran\c cois Legrand}

\email{flegrand@post.tau.ac.il}

\address{School of Mathematical Sciences, Tel Aviv University, Ramat Aviv, Tel Aviv 6997801, Israel}

\address{Department of Mathematics and Computer Science, the Open University of Israel, Ra'anana 4353701, Israel}

\date{\today}

\maketitle

\begin{abstract}
Given a number field $F$, a finite group $G$ and an indeterminate $T$, {\it{a $G$-parametric extension over $F$}} is a finite Galois extension $E/F(T)$ with Galois group $G$ and $E/F$ regular that has all the Galois extensions of $F$ with Galois group $G$ among its specializations. We are mainly interested in producing non-$G$-parametric extensions, which relates to classical questions in inverse Galois theory like the Beckmann-Black problem. Building on a strategy developed in previous papers, we show that there exists at least one non-$G$-parametric extension over $F$ for a given non-trivial finite group $G$ and a given number field $F$ under the sole necessary condition that $G$ occurs as the Galois group of a Galois extension $E/F(T)$ with $E/F$ regular.
\end{abstract}

\section{Introduction}

Given a number field $F$, the {\it{inverse Galois problem}} over $F$ asks whether every finite group $G$ occurs as the Galois group of a Galois extension of $F$. A classical way to obtain such an extension consists in introducing an indeterminate $T$ and in producing a Galois extension $E/F(T)$ with the same Galois group and $E/F$ {\it{regular}}\footnote{{\it{i.e.}}, $E\cap \overline{\Qq} = F$.}: from the {\it{Hilbert irreducibility theorem}}, the extension $E/F(T)$ has infinitely many linearly disjoint {\it{specializations}} with Galois group $G$ (if $G$ is not trivial). We refer to $\S$2.1 for basic terminology.
 
Following recent works \cite[\S4]{Leg16c} \cite{Leg15}, we are interested in the present paper in finite Galois extensions $E/F(T)$ with  $E/F$ regular - from now on, say for short that the extension $E/F(T)$ is an ``$F$-regular Galois extension" - that have all the Galois extensions of $F$ with Galois group $G$ among their specializations. More precisely, let us recall the following definition.

\begin{definition}
A finite $F$-regular Galois extension $E/F(T)$ with Galois group $G$ is {\it{$G$-parametric over $F$}} if every Galois extension of $F$ with Galois group $G$ occurs as a specialization of $E/F(T)$.
\end{definition}

Parametric extensions have been introduced with the aim of a better understanding of the {\it{Beckmann-Black problem}} which asks whether the specialization process to solve the inverse Galois problem is optimal. Namely, recall that the Beckmann-Black problem, for the finite group $G$ over the number field $F$, asks whether every Galois extension $L/F$ with Galois group $G$ is a specialization of some $F$-regular Galois extension $E_L/F(T)$ (possibly depending on $L/F$) with Galois group $G$. Although no counter-example is known and only a few positive results have been proved (see {\it{e.g.}} \cite[Theorem 2.2]{Deb01b} for more details), it may be expected that the Beckmann-Black problem fails in general over number fields. However, no line of attack seems to be known and a disproof is probably out of reach at the moment.

Actually, the answer to the following weaker question on parametric extensions seems to be unavailable in the literature. Say that a finite group $G$ is a ``regular Galois group over (the given number field) $F$" if $G$ occurs as the Galois group of an $F$-regular Galois extension of $F(T)$.

\begin{question} \label{ques}
Does there exist a regular Galois group $G$ over $F$ such that no $F$-regular Galois extension of $F(T)$ with Galois group $G$ is $G$-parametric over $F$?
\end{question}

\noindent
The existence of such a finite group $G$ would be a first step towards a counter-example to the Beckmann-Black problem over the number field $F$. However, although we may expect the answer to be negative almost always, deciding whether a given $F$-regular Galois extension of $F(T)$ with Galois group $G$ is $G$-parametric over $F$ or not is a difficult problem in general (even in the easiest case $G=\Zz/2\Zz$) and only a few non-parametric extensions are available in the literature. In particular, finding a group $G$ as in Question \ref{ques} seems to be difficult as well.

In \cite[\S4]{Leg16c} and \cite{Leg15}, we offer a systematic approach to produce $F$-regular Galois extensions $E/F(T)$ with Galois group $G$ which are not $G$-parametric over $F$. It consists in introducing another $F$-regular Galois extension $E'/F(T)$ with Galois group $G$ and in giving criteria ensuring that some specializations of $E'/F(T)$ with Galois group $G$ are not specializations of $E/F(T)$. Examples with specific finite groups $G$ such as abelian groups, symmetric and alternating groups, non-abelian simple groups, {\it{etc.}} are then given, under some natural necessary conditions. For example, an obvious obstruction to the existence of at least one non-$G$-parametric extension over $F$ is that $G$ is not a regular Galois group over $F$.

Building on this strategy, we show in this paper that the latter obstruction is the only one to the existence of a non-$G$-parametric extension over $F$ with Galois group $G$.

\begin{theorem} \label{thm main}
Let $G$ be a non-trivial finite group and $F$ a number field. Assume that $G$ is a regular Galois group over $F$. Then there exists at least one non-$G$-parametric extension over $F$ with Galois group $G$.
\end{theorem}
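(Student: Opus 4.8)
\emph{Outline.} The plan is to take a given $F$-regular Galois extension $E_{0}/F(T)$ with group $G$ --- one exists since $G$ is a regular Galois group over $F$ --- and to build the desired extension by pulling $E_{0}$ back along a \emph{generic} quadratic map $\psi_{c}\colon\mathbb{P}^{1}\to\mathbb{P}^{1}$, $\psi_{c}(S)=S^{2}+c$ with $c\in F$. The resulting extension $E/F(S)$ will be arranged to have \emph{no} $F$-rational branch point, so that, by Beckmann's theorem on the ramification of specializations, there is a set $V$ of primes of $F$ of positive density at which every specialization of $E/F(S)$ is unramified. Since $E_{0}/F(T)$ nonetheless has branch points, at a suitable $v\in V$ such a branch point reduces to an $\mathbb{F}_{v}$-rational point; specializing $E_{0}$ $v$-adically near it produces a Galois extension $L/F$ with group $G$ that is ramified at $v$, hence is not a specialization of $E/F(S)$. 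So $E/F(S)$ is a non-$G$-parametric extension over $F$ with group $G$, as required.

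\emph{Constructing $E$.} After an $F$-linear change of variable, assume $\infty$ is not a branch point of $E_{0}$; let $t_{1},\dots,t_{r}\in\overline{\mathbb{Q}}$ be its finite branch points, let $g_{1},\dots,g_{r}$ be associated inertia canonical generators, and set $p_{E_{0}}=\prod_{i}(T-t_{i})\in F[T]$. For $c\in F$ outside an explicit thin subset one checks: the fibre product $E:=E_{0}\otimes_{F(T),\psi_{c}}F(S)$ is geometrically irreducible --- it could fail to be so only if $\psi_{c}$ coincided, as a cover of $\mathbb{P}^{1}_{T}$, with one of the finitely many degree-$2$ subcovers of $E_{0}$, which a generic $c$ avoids --- so that $E/F(S)$ is an $F$-regular Galois extension with group $G$; its branch points are the points $\pm\sqrt{t_{i}-c}$, none of which is $F$-rational (choose $c$ so that no $t_{i}-c$ is a square), and $\infty$ is unbranched; and, avoiding finitely many further values of $c$, the automorphism $\tau$ of the splitting field $M_{E}$ of $p_{E}:=p_{E_{0}}(S^{2}+c)=\prod_{i}\bigl(S^{2}-(t_{i}-c)\bigr)$ that fixes $M_{E_{0}}:=F(t_{1},\dots,t_{r})$ and negates every $\sqrt{t_{i}-c}$ is well defined (no parasitic multiplicative relation of odd length among the $t_{i}-c$ survives) and acts on the roots of $p_{E}$ without fixed points.

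\emph{Producing $L$ and concluding.} By Chebotarev's density theorem there is a positive density of primes $v$ of $F$ with Frobenius $\tau$ in $\Gal(M_{E}/F)$; discarding the finitely many $v$ in the bad set of $E$ or of small residue characteristic, fix such a $v$. On the one hand, $p_{E}$ then has no root in $\mathbb{F}_{v}$ and $\infty$ is unbranched for $E$, so by Beckmann's theorem every specialization $E_{s_{0}}/F$ with $s_{0}\in\mathbb{P}^{1}(F)$ is unramified at $v$; on the other hand, $\tau$ fixes $M_{E_{0}}$, so every branch point $t_{i}$ of $E_{0}$ reduces to a simple $\mathbb{F}_{v}$-rational point $\bar a_{i}$ of the branch locus of $E_{0}$. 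Lift $\bar a_{1}$ to $a\in\mathcal{O}_{F}$; by Hilbert's irreducibility theorem in the form permitting prescribed behaviour at the places above $v$, we may choose $a\equiv\bar a_{1}\pmod v$, transversal to the branch locus of $E_{0}$ at $v$, such that $L:=(E_{0})_{a}/F$ has Galois group $G$. By Beckmann's theorem $L/F$ is ramified at $v$, with inertia of order $\mathrm{ord}(g_{1})>1$; since $v\in V$ no specialization of $E/F(S)$ is ramified at $v$, so $L/F$ is not one of them. Hence the $F$-regular Galois extension $E/F(S)$ with group $G$ is not $G$-parametric over $F$, which proves the theorem.

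\emph{Main obstacle.} The real work lies in the construction of $E$: one must secure simultaneously that the pulled-back extension keeps the \emph{full} group $G$, remains $F$-regular, and has a branch locus with the required arithmetic (no $F$-rational branch point and a fixed-point-free Galois action $\tau$ on the branch points), by excluding the finitely many ``bad'' values of $c$ for which the fibre product degenerates, a branch point becomes $F$-rational, or a multiplicative relation among the $t_{i}-c$ survives. Once $E$ is fixed, the remaining steps are routine combinations of Beckmann's theorem on specialization inertia, Hilbert's irreducibility theorem with local conditions, and Chebotarev's density theorem, all available through \cite{Leg16c, Leg15} and the references therein.
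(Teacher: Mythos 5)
Your argument is correct in substance and, while it shares the paper's general strategy --- compare the given realization $E_0/F(T)$ with a second $F$-regular realization of $G$ obtained by pullback, then exhibit primes at which every specialization of the new extension is unramified (Beckmann's specialization-inertia theorem, packaged as the criterion of \cite[Theorem 4.2]{Leg16c}, used in the paper as Lemma \ref{lemma 2}) while suitable specializations of $E_0$ with group $G$ are ramified --- the key construction is genuinely different. The paper pulls back along the whole family $T\mapsto T^k$, and its arithmetic heart is Proposition \ref{proposition 3} (proved by induction, via roots of unity and height arguments) ensuring, for infinitely many $k$, infinitely many primes dividing $P_{E_0}(T)$ but not $P_{E_0}(T^k)$. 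You instead pull back along a single quadratic map $T=S^2+c$ with a generically chosen translate $c$, and the required positive-density set of primes comes from Chebotarev applied to the explicit element $\tau$ fixing $M_{E_0}$ and negating every $\sqrt{t_i-c}$; up to the translation by $c$, your $\tau$ is exactly a certificate for condition (1) of Lemma \ref{Tchebotarev} with $P(T)=P_{E_0}(T+c)$ and $k=2$. This bypasses Proposition \ref{proposition 3} entirely and is shorter for Theorem \ref{thm main} itself; what it does not yield is the finer Theorem \ref{thm main 2} (a sequence $E_k$ with suitable $k$ for every finite extension $F'/F$, with the witnesses always coming from the fixed $E$), which is what the $T\mapsto T^k$ family buys. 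Two corrections are needed but are fixable. First, the bad set of $c$ is not finite: if some $t_i\in F$, then $t_i-c$ is a square in $F$ for infinitely many $c$, and an odd-length relation $\prod_{i\in I}(t_i-c)\in (M_{E_0}^{*})^2$ can likewise hold for infinitely many $c$; however each such condition defines a thin subset of $F$ (apply Hilbert's irreducibility theorem to $Y^2-\prod_{i\in I}(t_i-c)$, irreducible over $M_{E_0}(c)$ because $|I|$ is odd and the $t_i$ are distinct, and use that a subset of $F$ thin in the finite extension $M_{E_0}$ is thin in $F$), so a good $c$ exists, consistently with your opening ``outside a thin subset'' but not with ``avoiding finitely many further values''. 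Second, the final step producing $L=(E_0)_a/F$ with group exactly $G$ and ramified at $v$ (and infinitely many linearly disjoint such extensions, if desired) should be quoted precisely as \cite[Theorem 4.2]{Leg16c}, exactly as the paper does, rather than re-derived informally from ``Hilbert with local conditions''; with that reference your proof is complete.
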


\noindent
Actually, from any $F$-regular Galois extension $E/F(T)$ with Galois group $G$ satisfying some mild assumptions on its set of branch points, we derive a sequence $(E_k/F(T))_k$ of $F$-regular realizations of $G$ such that infinitely many linearly disjoint specializations of $E/F(T)$ with Galois group $G$ are not specializations of $E_k/F(T)$. See Theorem \ref{thm main 2}.

The paper is organized as follows. In \S2, we recall some material used in the sequel. In \S3, we prove Theorem \ref{thm main} under an auxiliary result on {\it{prime divisors of polynomials}} (Definition \ref{pdop}) that has its own interest; see Proposition \ref{proposition 3}. Proposition \ref{proposition 3} is proved in \S4. Finally, in \S5, we make related previous results from \cite{Leg15} more precise thanks to a group theoretic argument communicated to us by Reiter.

\vspace{2mm}

{\bf{Acknowledgments.}} This work was motivated by a visit of the author in Universit\"at Bayreuth. The author is then indebted with Stefan Reiter for Lemma \ref{Reiter} and would like to thank the Zahlentheorie team for hospitality and financial support. The author also wishes to thank Lior Bary-Soroker, Pierre D\`ebes, Danny Neftin and Jack Sonn for helpful discussions, as well as the anonymous referee for suggesting a simpler proof of Proposition \ref{proposition 3}. This research is partially supported by the Israel Science Foundation (grants No. 40/14 and No. 696/13).

\section{Basics}

For this section, let $F$ be a number field.

\subsection{Specializations of finite Galois extensions of $F(T)$}

Given an indeterminate $T$, let $E/F(T)$ be a finite Galois extension with Galois group $G$ and $E/F$ {\it{regular}} ({\it{i.e.}}, $E \cap \overline{\Qq}=F$). From now on, say for short that $E/F(T)$ is an ``$F$-regular Galois extension".

Recall that a point $t_0 \in \mathbb{P}^1(\overline{\Qq})$ is {\it{a branch point of $E/F(T)$}} if the prime ideal $(T-t_0) \overline{\Qq}[T-t_0]$ \footnote{Replace $T-t_0$ by $1/T$ if $t_0 = \infty$.} ramifies in the integral closure of $\overline{\Qq}[T-t_0]$ in the {\it{compositum}} of $E$ and $\overline{\Qq}(T)$ (in a fixed algebraic closure of $F(T)$). The extension $E/F(T)$ has only finitely many branch points.

Given a point $t_0 \in \mathbb{P}^1(F)$, not a branch point, the residue extension of $E/F(T)$ at a prime ideal $\mathcal{P}$ lying over $(T-t_0) {F}[T-t_0]$ is denoted by ${E}_{t_0}/F$ and called {\it{the specialization of ${E}/F(T)$ at $t_0$}}. It does not depend on the choice of the prime $\mathcal{P}$  lying over $(T-t_0) {F}[T-t_0]$ as ${E}/F(T)$ is Galois. The extension $E_{t_0}/F$ is Galois with Galois group a subgroup of $G$, namely the decomposition group of ${E}/F(T)$ at $\mathcal{P}$.

\subsection{Prime divisors of polynomials}

Denote the integral closure of $\Zz$ in $F$ by $O_F$. Let $P(T) \in O_F[T]$ be a non-constant monic polynomial. 

\begin{definition} \label{pdop}
Say that a non-zero prime ideal $\mathcal{P}$ of $O_F$ is {\it{a prime divisor of $P(T)$}} if the reduction of $P(T)$ modulo $\mathcal{P}$ has a root in the residue field $O_F/\mathcal{P}$.
\end{definition}

The following lemma will be used on several occasions in the sequel. Denote the roots of $P(T)$ by $t_1,\dots,t_r$. Given an integer $k \geq 1$ and an index $j \in \{1,\dots,r\}$, let $\sqrt[k]{t_j}$ be a $k$-th root of $t_j$. Finally, let $L_k$ be the splitting field of $P(T^k)$ over $F$ and $\zeta_k$ a primitive $k$-th root of unity.

\begin{lemma} \label{Tchebotarev}
The following three conditions are equivalent:

\vspace{0.5mm}

\noindent
{\rm{(1)}} $\bigcup_{j=1}^r \bigcup_{l=0}^{k-1} {\rm{Gal}}(L_k/F(\zeta_k^l \sqrt[k]{t_j})) \not= \bigcup_{j=1}^r {\rm{Gal}}(L_k/F({t_j}))$,

\vspace{0.5mm}

\noindent
{\rm{(2)}} there exists a set $\mathcal{S}$ of non-zero prime ideals of $O_F$ that has positive density and such that each prime ideal $\mathcal{P}$ in $\mathcal{S}$ is a prime divisor of $P(T)$ but not of $P(T^k)$,

\vspace{0.5mm}

\noindent
{\rm{(3)}} there exist infinitely many non-zero prime ideals of $O_F$ each of which is a prime divisor of $P(T)$ but not of $P(T^k)$.
\end{lemma}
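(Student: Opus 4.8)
The plan is to deduce all three equivalences from the Chebotarev density theorem, the real content being a dictionary between the action of $G_k := \Gal(L_k/F)$ on the roots of $P(T)$ and of $P(T^k)$ and the splitting behaviour of these two polynomials modulo primes of $O_F$. First I would make two harmless reductions. Replacing $P(T)$ by its radical changes neither its prime divisors, nor those of $P(T^k)$, nor the field $L_k$, nor the two sides of (1), so I may assume $P(T)$ is separable. Next I may assume $P(0) \neq 0$: if $0$ is a root of $P(T)$, then it is a root of the reduction of $P(T)$ and of $P(T^k)$ modulo every non-zero prime $\mathcal{P}$, so (2) and (3) both fail, while the root $t_j = 0$ forces $F(\zeta_k^l \sqrt[k]{t_j}) = F(t_j) = F$ for all $l$, so that both sides of (1) equal $G_k$ and (1) fails as well; thus the lemma is trivially true in that case.

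Assume now $P(T)$ separable with $P(0) \neq 0$. Then the $rk$ roots $\zeta_k^l \sqrt[k]{t_j}$ of $P(T^k)$ are pairwise distinct, $L_1 := F(t_1, \dots, t_r)$ is contained in $L_k$, and $L_k$ is generated over $F$ by the roots of $P(T^k)$. I would regard $G_k$ as a group of permutations of the roots of $P(T)$ and of those of $P(T^k)$, and set $V := \bigcup_{j=1}^r \Gal(L_k/F(t_j))$ and $U := \bigcup_{j=1}^r \bigcup_{l=0}^{k-1} \Gal(L_k/F(\zeta_k^l \sqrt[k]{t_j}))$, so that $V$ (resp.\ $U$) is the set of $\sigma \in G_k$ fixing at least one root of $P(T)$ (resp.\ of $P(T^k)$). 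Both $U$ and $V$ are stable under conjugation in $G_k$, being unions of point stabilizers for $G_k$-sets, and $U \subseteq V$ since a $\sigma$ fixing some $\beta$ with $\beta^k = t_j$ also fixes $t_j$. Hence condition (1) says exactly that $V \setminus U \neq \emptyset$.

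Next I would build the arithmetic dictionary. Let $S$ be the finite set of non-zero primes of $O_F$ that ramify in $L_k/F$ or divide the (non-zero) discriminant of $P(T)$ or of $P(T^k)$. For $\mathcal{P} \notin S$, choose a prime $\mathcal{Q}$ of the ring of integers of $L_k$ above $\mathcal{P}$, with Frobenius $\sigma_{\mathcal{Q}} \in G_k$ acting on the residue field at $\mathcal{Q}$ as $x \mapsto x^{|O_F/\mathcal{P}|}$; its conjugacy class $C_{\mathcal{P}}$ depends only on $\mathcal{P}$. Reducing $P(T) = \prod_i (T - t_i)$ modulo $\mathcal{Q}$, and using that the $t_i$ remain distinct modulo $\mathcal{P}$, one finds that $\mathcal{P}$ is a prime divisor of $P(T)$ if and only if $\sigma_{\mathcal{Q}}$ fixes some $t_i$ (the congruence $\sigma_{\mathcal{Q}}(t_i) \equiv t_i$ modulo $\mathcal{Q}$ being upgraded to an equality thanks to separability), if and only if $C_{\mathcal{P}} \cap V \neq \emptyset$, if and only if $C_{\mathcal{P}} \subseteq V$ (as $V$ is conjugation-stable). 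The same reasoning applied to $P(T^k)$ shows that $\mathcal{P}$ is a prime divisor of $P(T^k)$ if and only if $C_{\mathcal{P}} \subseteq U$. Hence, for $\mathcal{P} \notin S$, the prime $\mathcal{P}$ is a prime divisor of $P(T)$ but not of $P(T^k)$ if and only if $C_{\mathcal{P}} \subseteq V \setminus U$.

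With this dictionary the three implications follow at once. If (1) holds, $V \setminus U$ is a non-empty union of conjugacy classes of $G_k$, so by Chebotarev the primes $\mathcal{P} \notin S$ with $C_{\mathcal{P}} \subseteq V \setminus U$ form a set of density $|V \setminus U| / |G_k| > 0$, each of which is a prime divisor of $P(T)$ but not of $P(T^k)$; this is (2). The implication $(2) \Rightarrow (3)$ is trivial. And if (1) fails, then $V = U$, so every $\mathcal{P} \notin S$ that is a prime divisor of $P(T)$ is also one of $P(T^k)$, leaving only the finitely many primes in $S$ as possible exceptions and contradicting (3); hence $(3) \Rightarrow (1)$. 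I expect the only genuinely delicate step to be the dictionary in the third paragraph: choosing to reduce modulo a prime of the large field $L_k$, correctly isolating the finite exceptional set $S$, and turning the congruence $\sigma_{\mathcal{Q}}(t_i) \equiv t_i$ into an honest equality; everything else is formal.
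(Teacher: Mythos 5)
Your proof is correct and follows essentially the same route as the paper: the reduction to $P$ separable with $P(0)\neq 0$, the translation of ``prime divisor'' into ``Frobenius fixes a root'' (which the paper phrases via restriction to the splitting field $L_1$ of $P(T)$, while you work directly with Frobenius elements in ${\rm Gal}(L_k/F)$), and then the Tchebotarev density theorem for $(1)\Rightarrow(2)$ and the contrapositive for $(3)\Rightarrow(1)$. The extra care you take with the exceptional set of primes and with upgrading the congruence $\sigma(t_i)\equiv t_i$ to an equality is exactly the (standard) content the paper leaves implicit in its ``up to finitely many'' remarks.
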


\begin{proof}
We may assume that $P(T)$ is separable. If $P(0)=0$, then (1), (2) and (3) fail. From now on, we assume that $P(0) \not=0$. In particular, $P(T^k)$ is separable.

First, assume that (1) holds, {\it{i.e.}}, there exists some $\sigma$ in
$$\bigcup_{j=1}^{r} {\rm{Gal}}(L_k/F(t_j)) \, \setminus \, \bigcup_{j=1}^{r} \bigcup_{l=0}^{k-1} {\rm{Gal}}(L_k/F(\zeta_k^l \sqrt[k]{t_j})).$$
By the Tchebotarev density theorem, there exists a positive density set $\mathcal{S}$ of primes $\mathcal{P}$ of $O_F$ such that the associated Frobenius in $L_k/F$ is conjugate to $\sigma$. As $\sigma$ fixes no root of $P(T^k)$, such a $\mathcal{P}$ is not a prime divisor of $P(T^k)$ (up to finitely many). Denote the splitting field of $P(T)$ over $F$ by $L_1$. Then the Frobenius associated with $\mathcal{P}$ in $L_1/F$ is the restriction to $L_1$ of the one in $L_k/F$. As $\sigma$ fixes a root of $P(T)$, $\mathcal{P}$ is a prime divisor of $P(T)$ (up to finitely many), as needed for (2).

As implication (2) $\Rightarrow$ (3) is obvious, it remains to prove implication (3) $\Rightarrow$ (1). To do this, assume that (1) does not hold. Let $\mathcal{P}$ be a non-zero prime ideal of $O_F$ that is a prime divisor of $P(T)$ and that is unramified in $L_k/F$. Denote the associated Frobenius in $L_k/F$ by $\sigma$. As $\mathcal{P}$ is a prime divisor of $P(T)$ and $\mathcal{P}$ does not ramify in $L_1/F$, the associated Frobenius in $L_1/F$ fixes a root of $P(T)$ (up to finitely many). Since this Frobenius is the restriction of $\sigma$ to $L_1$, we get that $\sigma$ fixes a root of $P(T)$. As condition (1) fails, $\sigma$ fixes a root of $P(T^k)$ as well. Hence $\mathcal{P}$ is a prime divisor of $P(T^k)$ (up to finitely many). Then (3) does not hold either, thus ending the proof.
\end{proof}

\section{Proof of Theorem \ref{thm main}}

The aim of this section consists in proving Theorem \ref{thm main 2} below whose Theorem \ref{thm main} is a straightforward application.

\subsection{Statement of Theorem \ref{thm main 2}} 

Let $F$ be a number field, $O_F$ the integral closure of $\Zz$ in $F$ and $G$ a non-trivial finite group that is a regular Galois group over $F$ ({\it{i.e.}}, $G$ occurs as the Galois group of an $F$-regular Galois extension of $F(T)$). 

Given an indeterminate $T$, let $E/F(T)$ be an $F$-regular Galois extension with Galois group $G$, branch points $t_1,\dots,t_r$ and such that the following two conditions hold\footnote{These two conditions hold up to applying a suitable change of variable.}:

\vspace{0.5mm}

\noindent
(bp-1) $\{0,1,\infty\} \cap \{t_1,\dots,t_r\} = \emptyset$,

\vspace{0.5mm}

\noindent
(bp-2) $t_1,\dots,t_r$ all are integral over $O_F$.

\begin{theorem} \label{thm main 2}
There exists a sequence of $F$-regular Galois extensions $E_k/F(T)$, $k \in \mathbb{N} \setminus \{0\}$ (depending on $E/F(T)$), with Galois group $G$ and that satisfies the following conclusion.

\noindent
For each finite extension $F'/F$, there exist infinitely many positive integers $k$ (depending on $F'$) such that the extension $E_kF'/F'(T)$ satisfies the following condition:

\vspace{1mm}

\noindent
{\rm{(non-$G$-parametricity)}} {\it{there exist infinitely many linearly disjoint Galois extensions of $F'$ with Galois group $G$ each of which is not a specialization of $E_kF'/F'(T)$.}}

\vspace{1mm}

\noindent
In particular, the extension $E_kF'/F'(T)$ is not $G$-parametric over $F'$. Furthermore, these Galois extensions of $F'$ with Galois group $G$ may be produced by specializing the extension $EF'/F'(T)$.
\end{theorem}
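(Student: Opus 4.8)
The plan is to take $E_k/F(T)$ to be the pullback of $E/F(T)$ along the degree-$k$ self-map $T\mapsto T^k$ of $\mathbb{P}^1$, and then to separate the specializations of the two extensions by controlling which primes of $O_{F'}$ can ramify in them. Write $P(T)=\prod_{j=1}^r(T-t_j)$; by (bp-1) and (bp-2) this is a monic polynomial in $O_F[T]$ with $P(0)P(1)\neq 0$. Take $E_k/F(T)$ to be the function field of the pullback of the $G$-cover attached to $E/F(T)$ along $T\mapsto T^k$: it is automatically Galois over $F(T)$ with group $G$, and the one point to check, namely $F$-regularity (geometric connectedness), follows from a monodromy computation — by (bp-1) the points $0,\infty$ acquire trivial inertia in the pullback while the loops around the points $\zeta_k^l\sqrt[k]{t_j}$ still surject onto $G$. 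Its branch points are then exactly the roots of $P(T^k)$, again integral over $O_F$ and avoiding $\{0,1,\infty\}$.

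The next ingredient is the standard fact (Beckmann; see also D\`ebes) that for an $F'$-regular Galois extension of $F'(T)$ with branch points integral over $O_{F'}$ and branch-point polynomial $Q$, there is a finite set $S_0$ of primes of $O_{F'}$ — containing at least the primes of bad reduction and the divisors of $\operatorname{disc}(Q)$ — such that, first, every specialization at a point of $\mathbb{P}^1(F')$ is unramified at every prime $\mathcal{P}\notin S_0$ that is not a prime divisor of $Q$, and, second, if $t_0\in O_{F'}$ reduces modulo some $\mathcal{P}\notin S_0$ to a simple root of $\overline Q$ with $\mathcal{P}$-adic separation $1$ from the corresponding branch point, then the specialization at $t_0$ is tamely and nontrivially ramified at $\mathcal{P}$. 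I would apply this to $EF'/F'(T)$ (polynomial $P$) and to $E_kF'/F'(T)$ (polynomial $P(T^k)$), with $S_0$ chosen large enough for both.

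Now fix a finite extension $F'/F$ and apply Proposition \ref{proposition 3} to $P$ over $F'$: this yields infinitely many $k$ for which there are infinitely many primes of $O_{F'}$ that are prime divisors of $P(T)$ but not of $P(T^k)$, forming — by Lemma \ref{Tchebotarev} — a set $\mathcal{S}_k$ of positive density. Fix such a $k$, discard from $\mathcal{S}_k$ the finitely many primes of $S_0$, and for each remaining $\mathcal{P}\in\mathcal{S}_k$ choose $t_0\in O_{F'}$ reducing modulo $\mathcal{P}$ to a simple root of $\overline P$ with separation $1$ from the corresponding branch point, and — by a form of Hilbert's irreducibility theorem allowing finitely many congruence conditions — with $E_{t_0}F'/F'$ of Galois group $G$. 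Then $E_{t_0}F'/F'$ is ramified at $\mathcal{P}$, while no specialization of $E_kF'/F'(T)$ is (as $\mathcal{P}\notin S_0$ is not a prime divisor of $P(T^k)$); so $E_{t_0}F'/F'$ is not a specialization of $E_kF'/F'(T)$, though it is one of $EF'/F'(T)$ — which is exactly the last assertion of the theorem.

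The remaining and most delicate point is to produce \emph{infinitely many linearly disjoint} such $G$-extensions rather than just one. I would do this by induction: given linearly disjoint $E_{t_0^{(1)}}F',\dots,E_{t_0^{(n)}}F'$ with compositum $M_n/F'$, pick fresh primes in $\mathcal{S}_k\setminus S_0$, distinct from the earlier ones and unramified in $M_n/F'$, and build $E_{t_0^{(n+1)}}F'/F'$ of group $G$ ramified at them. To get genuine linear disjointness from $M_n$ — not merely non-containment — every nontrivial subextension must ramify outside $M_n$, i.e.\ the inertia accumulated at the chosen primes must normally generate $G$; since for groups like $(\mathbb{Z}/2\mathbb{Z})^2$ a single ramified prime does not suffice, one must prescribe inertia generators $g_1,\dots,g_r$ of $E/F(T)$ at several primes of $\mathcal{S}_k$ simultaneously, which is where the positive density in Lemma \ref{Tchebotarev} and a Chebotarev argument inside the fields $F'(t_j)$ come in. Arranging ramification at several prescribed primes while keeping the specialization's Galois group equal to $G$ is the step I expect to be the most technical; it is the natural elaboration of the methods of \cite{Leg16c} and \cite{Leg15}.
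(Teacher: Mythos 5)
Your construction coincides with the paper's up to the last step: $E_k/F(T)$ is the pullback of $E/F(T)$ along $T\mapsto T^k$ (the paper proves its $F$-regularity and that its group is $G$ via absolute irreducibility of $P(T^k,Y)$ plus Hilbert irreducibility rather than by monodromy, but that is a presentational difference), and the supply of primes dividing $P_E(T)$ but not $P_E(T^k)$ comes from Proposition \ref{proposition 3} applied over $F'$, exactly as in the paper. Where the paper then simply invokes the criterion \cite[Theorem 4.2]{Leg16c} (this is the content of Lemma \ref{lemma 2}), which already delivers \emph{infinitely many linearly disjoint} specializations of $EF'/F'(T)$ with group $G$ that are not specializations of $E_kF'/F'(T)$, you set out to re-derive that criterion from Beckmann's specialization-inertia theorem and Hilbert's irreducibility theorem with congruence conditions. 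That re-derivation is fine for producing \emph{one} such extension per good prime $\mathcal{P}\in\mathcal{S}_k$ (your handling of meeting at infinity is glossed over, but (bp-2) makes it harmless, as in the paper via \cite[Remark 3.11]{Leg16c}).

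The genuine gap is the linear disjointness step, which you explicitly leave open, and the route you sketch for it is doubtful: you propose to force $E_{t_0}F'\cap M_n=F'$ by making the inertia subgroups at several primes of $\mathcal{S}_k$ normally generate $G$. But a prime in $\mathcal{S}_k$ is only known to be a prime divisor of $P_E(T)=\prod_j m_{t_j}(T)$; you cannot prescribe \emph{which} branch point it meets, hence which inertia class (a power of which $g_j$) you pick up, and the classes actually available from $\mathcal{S}_k$ need not normally generate $G$. Moreover this is harder than necessary. The standard fix, and in substance what \cite[Theorem 4.2]{Leg16c} does, is an induction on the compositum $M_n$ of the extensions already built: since $E/F$ is regular, $EM_n/M_n(T)$ is an $M_n$-regular Galois extension with group $G$, so by Hilbert's irreducibility theorem combined with the single congruence condition at one fresh prime $\mathcal{P}\in\mathcal{S}_k\setminus S_0$ one finds $t_0$ with ${\rm{Gal}}((EM_n)_{t_0}/M_n)=G$ and $E_{t_0}F'/F'$ ramified at $\mathcal{P}$; the first property forces $E_{t_0}F'\cap M_n=F'$ (linear disjointness needs no control of inertia beyond one ramified prime), and the second, together with $\mathcal{P}$ not being a prime divisor of $P_E(T^k)$, rules out $E_{t_0}F'$ being a specialization of $E_kF'/F'(T)$. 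As written, however, the ``infinitely many linearly disjoint'' requirement of the theorem is not established by your argument.
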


\begin{remark} \label{rk 4.2}
(1) As a classical consequence of the Riemann existence theorem, every finite group $G$ is a regular Galois group over some num-
ber field $F_G$, and then over every number field $F'$ containing $F_G$. Hence Theorem \ref{thm main 2} provides the following statement.

\vspace{1mm}

\noindent
{\it{Let $G$ be a non-trivial finite group. Then there exists some number field $F_G$ that satisfies the following property. For each number field $F'$ containing $F_G$, there exists an $F'$-regular Galois extension of $F'(T)$ with Galois group $G$  which satisfies the {\rm{(non-$G$-parametricity)}} condition. Moreover, one can take $F_G$ equal to a given number field $F$ if and only if $G$ is a regular Galois group over $F$.}}

\vspace{1.5mm}

\noindent
(2) As explained in \S3.2.4 below, we are not able to remove the dependence on the number field $F'$ containing $F$ in the set of all suitable positive integers $k$. In particular, the proof provides no integer $k$ such that the extension $E_kF'/F'(T)$ satisfies the {\rm{(non-$G$-parametricity)}} condition for each finite extension $F'/F$. See Proposition \ref{prop main precise} for a result with such a geometric conclusion.
\end{remark}

\subsection{Proof of Theorem \ref{thm main 2}}

We break the proof into four parts.

\subsubsection{Notation}

Given a positive integer $k$ and $j \in \{1,\dots,r\}$, let $\sqrt[k]{t_j}$ be a $k$-th root of $t_j$. Let $F'/F$ be a finite extension and $O_{F'}$ the integral closure of $\Zz$ in $F'$.

By condition (bp-1), one may consider the polynomial $$P_E(T) := \prod_{j=1}^r (T-t_j).$$ By condition (bp-2), the monic separable polynomial $P_E(T)$ has coefficients in $O_{F}$.

\subsubsection{Two lemmas}

Fix a positive integer $k$.

First, we derive from the extension $E/F(T)$ an $F$-regular Galois extension of $F(T)$ with group $G$ and specified set of branch points.

\begin{lemma} \label{lemma 1}
There exists an $F$-regular Galois extension of $F(T)$ with Galois group $G$ and whose branch points are exactly the $k$-th roots of those of $E/F(T)$.
\end{lemma}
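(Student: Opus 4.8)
The plan is to construct the desired extension by pulling back $E/F(T)$ along the degree-$k$ cover $T \mapsto T^k$, i.e. by passing from $E$ to the splitting field of the minimal polynomial of a generator of $E$ over $F(T)$ after substituting $T^k$ for $T$. Concretely, let $E = F(T)[X]/(p(T,X))$ and consider the extension of $F(T)$ cut out inside a fixed algebraic closure by the compositum $E' := E \cdot_{F(T^k) \subset F(T)} F(T)$, where $F(T^k)$ is embedded in $F(T)$ in the obvious way and we rename the variable of the target copy of $F(T)$ back to $T$. Equivalently, if $\widetilde{E}/F(U)$ is a copy of our given extension with branch points $t_1,\dots,t_r$, then $E' := \widetilde{E} \otimes_{F(U), U \mapsto T^k} F(T)$; the point of condition (bp-1), specifically $0,\infty \notin \{t_1,\dots,t_r\}$, is exactly to ensure this fibre product stays connected and the branch locus behaves as expected.

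The key steps, in order, are as follows. First I would check that $E'/F(T)$ is Galois with Galois group $G$: this is immediate from the fact that $F(T)/F(T^k)$ and $\widetilde{E}/F(U)$ have coprime-to-nothing behaviour at the branch points — more carefully, one shows $\widetilde{E} \cap F(T) = F(U)$ inside the relevant algebraic closure, which holds because the branch points of $\widetilde{E}/F(U)$ (namely $t_1,\dots,t_r$) are disjoint from the branch locus $\{0,\infty\}$ of $F(T)/F(U)$ by (bp-1), so the two extensions of $F(U)$ are linearly disjoint. Hence $\mathrm{Gal}(E'/F(T)) \cong \mathrm{Gal}(\widetilde{E}/F(U)) = G$. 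Second, I would verify regularity: $E' \cap \overline{\Qq} \subseteq \widetilde{E} \cap \overline{\Qq} = F$ (using that $\widetilde{E}/F$ is regular and $E' \subseteq \widetilde{E} \cdot F(T)$ with $F(T)/F$ already regular), so $E'/F$ is regular. Third, I would compute the branch points: a point $s_0 \in \Pp^1(\overline{\Qq})$ is a branch point of $E'/F(T)$ iff $s_0^k \in \{t_1,\dots,t_r\}$, i.e. iff $s_0$ is one of the $k$-th roots of some $t_j$ — this uses Abhyankar's lemma / the standard behaviour of ramification under fibre products of covers, together with (bp-1) to rule out spurious ramification over $0$ and $\infty$ (over $0$ and $\infty$ the map $T \mapsto T^k$ is ramified, but $E/F(T)$ is unramified there, so by Abhyankar no ramification appears in $E'$ over those points). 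So the branch points of $E'/F(T)$ are exactly $\{\zeta_k^l \sqrt[k]{t_j} : 1 \le j \le r,\ 0 \le l \le k-1\}$, which is precisely the set of $k$-th roots of the branch points of $E/F(T)$, as required; this $E'$ is the extension claimed in the lemma.

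The main obstacle I expect is the branch-point computation, and more precisely making the fibre-product construction genuinely produce a \emph{connected} (equivalently, field) extension with the \emph{full} group $G$ rather than a disconnected cover or a proper subgroup. This is where (bp-1) is essential and must be invoked carefully: the linear disjointness of $\widetilde{E}/F(U)$ and $F(T)/F(U)$ over $F(U)$ rests on their ramification loci being disjoint, and one should phrase this cleanly, perhaps via the inertia generators / the fact that an unramified-vs-ramified argument forces the intersection field to be unramified everywhere hence equal to $F(U)$ (by the Hurwitz genus formula or simply by $\Pp^1$ having no unramified covers). A secondary, more bookkeeping-level issue is keeping track of which copy of $F(T)$ is which after the substitution $U = T^k$, so that the final object really is an extension of $F(T)$ and not of $F(U)$; this is routine but worth stating explicitly. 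The use of Abhyankar's lemma to handle tame ramification over the $k$-th roots (and to confirm nothing unexpected happens over $0,\infty$) is standard and I would cite it rather than reprove it.
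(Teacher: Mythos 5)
Your construction is correct in substance, but it follows a genuinely different route from the paper's. The paper argues algebraically with the irreducible polynomial $P(T,Y)$ of a primitive element: it first gets absolute irreducibility of $P(T^k,Y)$ by citing a lemma of D\`ebes (this is where (bp-1) enters, through the existence of a root of $P$ in $\overline{\Qq}((T))$), and then identifies the Galois group of the Galois closure $\widehat{E_k}/F(T)$ by a Hilbert irreducibility argument: for all but finitely many $t_0$ the specialization $(\widehat{E_k})_{t_0}$ coincides with $E_{t_0^k}$, so the group $H_k$ embeds into $G$, while $|G|=[E_k:F(T)]$ divides $|H_k|$, forcing $H_k=G$; the branch-point statement is then read off from the construction. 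You work instead at the level of covers: linear disjointness of $\widetilde{E}$ and $F(T)$ over $F(U)$ from disjointness of the two branch loci (bp-1) plus the absence of nontrivial unramified covers of $\mathbb{P}^1$ (note this implication also uses that $\widetilde{E}/F(U)$ is Galois), which gives the group $G$ directly by restriction, and Abhyankar's lemma for the exact branch locus. Your route is more geometric and makes the branch-point computation transparent (the paper is terse there); the paper's route outsources the connectedness issue to a quotable irreducibility lemma and avoids any ramification bookkeeping.

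One step of yours is not justified as written: regularity. From $E'=\widetilde{E}\cdot F(T)$ you deduce $E'\cap\overline{\Qq}\subseteq\widetilde{E}\cap\overline{\Qq}=F$ on the grounds that $\widetilde{E}/F$ and $F(T)/F$ are regular, but a compositum of two $F$-regular extensions need not be $F$-regular --- e.g.\ $F(x)$ and $F(\sqrt{2}\,x)$ are linearly disjoint degree-$2$ extensions of $F(x^2)$ whose compositum $F(\sqrt{2})(x)$ is not regular over $F$ --- and the containment $E'\cap\overline{\Qq}\subseteq\widetilde{E}$ is essentially what has to be proved. The fix stays entirely within your argument: run the trivial-intersection step over $\overline{\Qq}$ rather than over $F$, i.e.\ show $\widetilde{E}\overline{\Qq}\cap\overline{\Qq}(T)=\overline{\Qq}(U)$ by the same disjoint-branch-loci and simply-connected-$\mathbb{P}^1$ reasoning (which is also cleaner, since over $F$ one must separately rule out constant subextensions), deduce $[E'\overline{\Qq}:\overline{\Qq}(T)]=|G|$, and compare with $[E':F(T)]\le |G|$; this yields simultaneously that ${\rm Gal}(E'/F(T))\cong G$ and that $E'/F$ is regular.
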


\begin{proof}
The proof below follows part of an argument of D\`ebes and Zannier given in the proof of \cite[Proposition 5.2]{DW08}. Let $P(T,Y) \in F[T][Y]$ be the irreducible polynomial over $F(T)$ of some primitive element of $E$ over $F(T)$, assumed to be integral over $F[T]$. The polynomial $P(T,Y)$ is absolutely irreducible (as $E/F(T)$ is $F$-regular) and, as 0 is not a branch point (condition (bp-1)), it has a root in $\overline{\Qq}((T))$. By \cite[Lemma 0.1]{Deb92}, the polynomial $P_k(T,Y):=P(T^k,Y)$ is absolutely irreducible. Denote the field generated by one root of $P_k(T,Y)$ over $F(T)$ by $E_k$. The extension $E_k/F(T)$ is $F$-regular (as $P_k(T,Y)$ is absolutely irreducible) and has degree equal to the order of $G$. Denote the Galois closure of $E_k/F(T)$ by $\widehat{E_k}/F(T)$ and the Galois group of $\widehat{E_k}/F(T)$ by $H_k$. By the Hilbert irreducibility theorem, there are infinitely many $t_0 \in F$ such that the specialization $(\widehat{E_k})_{t_0}/F$ of $\widehat{E_k}/F(T)$ at $t_0$ has Galois group $H_k$. For all but finitely many $t_0 \in F$, the field $(\widehat{E_k})_{t_0}$ is the splitting field over $F$ of the polynomial $P_k(t_0,Y)=P(t_0^k,Y)$, which is in turn the field $E_{t_0^k}$. Hence there is a specialization of $E/F(T)$ with Galois group $H_k$. In particular, $H_k$ is a subgroup of $G$. As the order of $G$ divides the order of $H_k$, we get $G=H_k$. Hence $E_k/F(T)$ is an $F$-regular Galois extension with Galois group $G$. By construction, the branch points of $E_k/F(T)$ lying in $\overline{\Qq} \setminus \{0\}$ are the $k$-th roots of those of $E/F(T)$. As neither 0 nor $\infty$ is a branch point of $E/F(T)$ (condition (bp-1)), the same is true of $E_k/F(T)$, thus ending the proof.
\end{proof}

Let $E_k/F(T)$ be an $F$-regular Galois extension with Galois group $G$ and whose branch points are exactly the $k$-th roots of those of $E/F(T)$.

Next, we apply a previous criterion from \cite{Leg16c} for the extension $E_kF'/F'(T)$ to satisfy the (non-$G$-parametricity) condition.

\begin{lemma} \label{lemma 2}
Assume that there exist infinitely many non-zero prime ideals of $O_{F'}$ each of which is a prime divisor of $P_E(T)$ but not of $P_E(T^k)$ (considered as polynomials with coefficients in $F'$). Then the extension $E_kF'/F'(T)$ satisfies the {\rm{(non-$G$-parametricity)}} condition. Moreover, the Galois extensions of $F'$ with Galois group $G$ appearing in the {\rm{(non-$G$-parametricity)}} condition may be produced by specializing the extension $EF'/F'(T)$.
\end{lemma}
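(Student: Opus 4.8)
The statement I'm asked to prove is Lemma~\ref{lemma 2}: under the hypothesis that there are infinitely many prime ideals of $O_{F'}$ dividing $P_E(T)$ but not $P_E(T^k)$ (over $F'$), the extension $E_kF'/F'(T)$ satisfies the (non-$G$-parametricity) condition, with the bad specializations coming from $EF'/F'(T)$. Since this is explicitly advertised as ``a previous criterion from \cite{Leg16c}'', the intended proof is not self-contained from scratch — it is an invocation and translation. So my plan is: first, recall the precise statement of the relevant criterion from \cite{Leg16c}, which (in that paper's language) should say something like: \emph{if $E'/F'(T)$ and $E''/F'(T)$ are two $F'$-regular $G$-extensions whose branch point sets are ``sufficiently independent'' modulo infinitely many primes, then infinitely many linearly disjoint specializations of $E'/F'(T)$ with group $G$ are not specializations of $E''/F'(T)$.} Second, set $E' := EF'/F'(T)$ and $E'' := E_kF'/F'(T)$ and verify that the hypothesis of that criterion is exactly the prime-divisor condition being assumed here.

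The key input, and the reason the $k$-th root construction of Lemma~\ref{lemma 1} is relevant, is a local-global comparison of specializations. Recall that a specialization $E_{t_0}/F'$ at an integral point $t_0$ has, at a prime $\mathcal{P}$ of $O_{F'}$, behaviour governed by whether $\mathcal{P}$ reduces a branch point or not: roughly, $\mathcal{P}$ is unramified in $E_{t_0}/F'$ provided $t_0 \bmod \mathcal{P}$ avoids the reductions of the branch points, and conversely the ramification at $\mathcal{P}$ in $E_{t_0}/F'$ (its inertia) is controlled by the inertia of $E/F'(T)$ at the branch point congruent to $t_0$. The branch points of $E_kF'/F'(T)$ are the $k$-th roots $\sqrt[k]{t_j}$ of those of $EF'/F'(T)$. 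So the statement ``$\mathcal{P}$ is a prime divisor of $P_E(T)$ but not of $P_E(T^k)$'' says precisely: there is a residue value $\bar t_0 \in O_{F'}/\mathcal{P}$ that is congruent to some branch point $t_j$ of $EF'/F'(T)$ but to \emph{none} of the branch points $\zeta_k^l\sqrt[k]{t_j}$ of $E_kF'/F'(T)$. Hence for such $\mathcal{P}$ one can pick $t_0 \in F'$ lifting $\bar t_0$; the specialization $(EF')_{t_0}/F'$ will be ramified at $\mathcal{P}$ (it meets a branch point mod $\mathcal{P}$), whereas the specialization $(E_kF')_{t_0}/F'$ is unramified at $\mathcal{P}$. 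This ramification discrepancy shows $(EF')_{t_0}/F'$ cannot be a specialization of $E_kF'/F'(T)$ — it differs from every such specialization by its ramification at $\mathcal{P}$.

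The remaining work is to (a) upgrade ``one bad specialization per prime'' to ``infinitely many linearly disjoint bad specializations'', and (b) ensure the bad specializations have full Galois group $G$, not a proper subgroup. For (a), I would run the following bootstrap: having produced $F'$-regular $G$-extensions whose specializations realize $G$ with infinitely many prescribed local ramification profiles (one per prime in the infinite set of Lemma~\ref{lemma 2}'s hypothesis), choose finitely many at a time with pairwise distinct ramified primes; linear disjointness over $F'$ then follows from the distinctness of ramification loci together with the fact that a nontrivial Galois subextension common to two of them would be unramified at those distinguishing primes, forcing it to be trivial. For (b), I would invoke Hilbert irreducibility together with the local conditions: the set of $t_0\in F'$ that (i) reduce to the prescribed residue mod $\mathcal{P}$ and (ii) give specialization group exactly $G$ is still infinite (a nonempty open condition at $\mathcal{P}$ intersected with a Hilbert set), so the bad specializations can be taken with group $G$.

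\textbf{Main obstacle.} The conceptual content is all in \cite{Leg16c}; the real work in writing this proof is the \emph{translation}: matching the hypothesis ``prime divisor of $P_E(T)$ but not of $P_E(T^k)$'' with whatever independence-of-branch-points hypothesis that paper's criterion uses, and checking that passing from $E/F(T)$ to $E_k/F(T)$ (branch points $\mapsto$ their $k$-th roots) and then base-changing to $F'$ does not disturb any regularity or branch-point hypothesis the criterion needs — in particular that $E_kF'/F'(T)$ is still $F'$-regular with group $G$, which needs $EF'/F'(T)$ to remain $F'$-regular with group $G$ (true, as $E/F$ regular) and then Lemma~\ref{lemma 1} applied over $F'$, or equivalently the observation that the construction of $E_k$ commutes with the base change $F\rightsquigarrow F'$. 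I expect the base-change bookkeeping — not any new idea — to be the delicate part, and I would handle it by applying Lemma~\ref{lemma 1} and the $P_E$ construction after base-changing to $F'$ from the outset, so that the cited criterion is invoked over the single field $F'$.
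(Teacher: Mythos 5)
Your proposal is correct and takes essentially the same route as the paper: both reduce the lemma to the specialization criterion of \cite[Theorem 4.2]{Leg16c}, applied over $F'$ to the pair $EF'/F'(T)$ and $E_kF'/F'(T)$, using that the roots of $P_E(T^k)$ are exactly the branch points of $E_kF'/F'(T)$ to translate the prime-divisor hypothesis. The bookkeeping you defer is precisely what the paper does: the criterion is phrased with the products of the minimal polynomials of the branch points \emph{and of their inverses}, and condition (bp-1) together with \cite[Remark 3.11]{Leg16c} lets one replace these by $P_E(T)$ and $P_E(T^k)$ up to finitely many primes (your side remarks on how the criterion itself is proved, e.g.\ deducing linear disjointness from distinct ramified primes alone, are rougher, but they are not load-bearing since the criterion is cited, not reproved).
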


\begin{proof}
Given an algebraic number $t \not=0$, denote the irreducible polynomial of $t$ over $F'$ by $m_{t}(T)$. Consider the following four polynomials:
$$m_{EF'}(T) =\prod_{j=1}^r m_{t_j}(T),$$
$$m_{EF'}^*(T) = \prod_{j=1}^r m_{1/t_j}(T),$$
$$m_{E_kF'}(T) = \prod_{j=1}^r \prod_{l=0}^{k-1} m_{e^{2 i \pi l/k} \sqrt[k]{t_j}}(T),$$
$$m_{E_kF'}^*(T) = \prod_{j=1}^r \prod_{l=0}^{k-1} m_{1/(e^{2 i \pi l/k} \sqrt[k]{t_j})}(T).$$
By \cite[Theorem 4.2]{Leg16c} and since the branch points of the extension $E_k/F(T)$ are the $k$-th roots of those of $E/F(T)$, it suffices to prove that there exist infinitely many non-zero prime ideals of $O_{F'}$ each of which is a prime divisor of $m_{EF'}(T) \cdot m_{EF'}^*(T)$ but not of $m_{E_kF'}(T) \cdot m_{E_kF'}^*(T)$.

As $\infty$ is not a branch point of $EF'/F'(T)$ (condition (bp-1)), one may apply \cite[Remark 3.11]{Leg16c} to get that $m_{EF'}(T) \cdot m_{EF'}^*(T)$ and $m_{EF'}(T)$ have the same prime divisors (up to finitely many). Since the polynomials $m_{EF'}(T)$ and $P_{E}(T)$ have the same prime divisors, we get that $m_{EF'}(T) \cdot m_{EF'}^*(T)$ and $P_{E}(T)$ have the same prime divisors (up to finitely many). By the same argument, every prime divisor of $m_{E_kF'}(T) \cdot m_{E_kF'}^*(T)$ is a prime divisor of $P_{E}(T^k)$ (up to finitely many). Then, from the assumption in the statement, there exist infinitely many non-zero prime ideals of $O_{F'}$ each of which is a prime divisor of $m_{EF'}(T) \cdot m_{EF'}^*(T)$ but not of $m_{E_kF'}(T) \cdot m_{E_kF'}^*(T)$, as needed.
\end{proof}

\subsubsection{A number theoretical result}

Now, we need the following number theoretical result to ensure that the assumption of Lemma \ref{lemma 2} holds.

\begin{proposition} \label{proposition 3}
Given a monic separable polynomial $P(T) \in O_F[T]$ such that $P(0) \not=0$ and $P(1) \not=0$, there is an infinite set $S$ of integers $k \geq 1$ such that, for each $k \in S$, there are infinitely many prime ideals of $O_{F}$ each of which is a prime divisor of $P(T)$ but not of $P(T^k)$.
\end{proposition}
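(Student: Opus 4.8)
The plan is to use Lemma~\ref{Tchebotarev} to reduce the statement to a purely group-theoretic (in fact Galois-theoretic) assertion, and then to establish that assertion by exhibiting, for suitable $k$, an element of the relevant Galois group that fixes a root of $P(T)$ but no root of $P(T^k)$. Write $t_1,\dots,t_r$ for the roots of $P(T)$; since $P$ is separable with $P(0)\neq 0$ and $P(1)\neq 0$, the $t_j$ are nonzero and distinct and none equals $1$. By Lemma~\ref{Tchebotarev}, condition (3) of that lemma holds for a given $k$ if and only if condition (1) holds, i.e.
\[
\bigcup_{j=1}^r \bigcup_{l=0}^{k-1} \Gal(L_k/F(\zeta_k^l\sqrt[k]{t_j})) \;\neq\; \bigcup_{j=1}^r \Gal(L_k/F(t_j)),
\]
where $L_k$ is the splitting field of $P(T^k)$ over $F$. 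So it suffices to produce infinitely many $k$ for which some $\sigma \in \Gal(L_k/F)$ fixes some $t_j$ (equivalently, lies in the right-hand union) but fixes no $k$-th root $\zeta_k^l\sqrt[k]{t_j}$ of any $t_j$ (equivalently, avoids the left-hand union).

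The first step is to analyze the structure of $L_k/F$. One has $L_k \supseteq L_1(\zeta_k)$, where $L_1$ is the splitting field of $P(T)$, and $L_k = L_1\bigl(\zeta_k, \sqrt[k]{t_1},\dots,\sqrt[k]{t_r}\bigr)$. The idea is to choose $k$ to be a prime $\ell$ (ranging over an infinite set of primes) that is totally split in $L_1/F$ and large enough to avoid finitely many bad primes; more precisely, pick $\ell$ so that $\ell$ does not divide the relevant discriminants/ramification and so that the mod-$\ell$ reductions behave generically. For such $\ell$, a Kummer-theory computation should show that $\Gal(L_\ell/L_1(\zeta_\ell))$ is as large as possible, namely $(\Zz/\ell\Zz)^s$ where $s$ is the rank of the subgroup of $L_1(\zeta_\ell)^\times/(L_1(\zeta_\ell)^\times)^\ell$ generated by the classes of $t_1,\dots,t_r$; the point is that for all but finitely many $\ell$ this rank is positive (indeed it should equal $r$, or at least $\geq 1$, since the $t_j$ are not roots of unity and not all $\ell$-th powers). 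Then one picks $\sigma$ to be an element of $\Gal(L_\ell/F)$ that restricts to the identity on $L_1$ (so it fixes every $t_j$, giving membership in the right-hand union) but acts nontrivially on each $\sqrt[\ell]{t_j}$ — which is possible precisely because the Kummer radicals are moved by $\Gal(L_\ell/L_1(\zeta_\ell))$ and, after possibly also using a nontrivial element of $\Gal(F(\zeta_\ell)/F)$ or adjusting on the $\zeta_\ell$-part, one can arrange that no $\sigma$-orbit among the $\ell$-th roots of any $t_j$ is a fixed point.

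The main obstacle I anticipate is the last sentence: ensuring simultaneously, for a single $\sigma$, that \emph{no} $\zeta_\ell^l \sqrt[\ell]{t_j}$ is fixed for \emph{any} $j$ and \emph{any} $l$. Fixing $\zeta_\ell^l\sqrt[\ell]{t_j}$ requires $\sigma$ to fix $\zeta_\ell$ and to send $\sqrt[\ell]{t_j}\mapsto \sqrt[\ell]{t_j}$; if instead $\sigma$ acts nontrivially on $\zeta_\ell$ then it automatically permutes the $\ell$-th roots of $t_j$ without fixed point (a Frobenius-type element whose image in $(\Zz/\ell\Zz)^\times$ is nontrivial cannot fix any primitive-related root), which handles all $j$ at once. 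So the cleanest route is: choose $\ell$ split in $L_1$ but such that there is $\sigma\in\Gal(L_\ell/F)$ acting as the identity on $L_1$ and nontrivially (via some $a\not\equiv 1 \bmod \ell$) on $\zeta_\ell$; such $\sigma$ exists whenever $L_1 \cap F(\zeta_\ell) = F$, which holds for all but finitely many $\ell$ since $L_1/F$ is a fixed finite extension. This $\sigma$ fixes all $t_j$ and no $\ell$-th root of any $t_j$, so condition (1) of Lemma~\ref{Tchebotarev} holds for $k=\ell$, and hence by that lemma there are infinitely many prime divisors of $P(T)$ that are not prime divisors of $P(T^\ell)$. Letting $S$ be this infinite set of primes $\ell$ finishes the proof; the remaining work is bookkeeping of the finitely many excluded primes and checking $L_1\cap F(\zeta_\ell)=F$, which is routine.
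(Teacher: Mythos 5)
Your reduction via Lemma \ref{Tchebotarev} matches the paper's, but the Galois-theoretic core of your argument has a genuine gap, in fact two. First, the key claim of your ``cleanest route'' is false: an element $\sigma\in\Gal(L_\ell/F)$ that is trivial on $L_1$ and acts nontrivially on $\zeta_\ell$ need \emph{not} move every $\ell$-th root of every $t_j$. Fixing a single root $\zeta_\ell^l\sqrt[\ell]{t_j}$ does not force $\sigma$ to fix $\zeta_\ell$ (only fixing two distinct roots would). Concretely, take $F=\Qq$, $P(T)=T-2$, $\ell=3$: complex conjugation on $L_3=\Qq(\zeta_3,2^{1/3})$ is trivial on $L_1=\Qq$, sends $\zeta_3\mapsto\zeta_3^{2}$, yet fixes the real cube root $2^{1/3}$. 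So the element you exhibit may well lie in the left-hand union of condition (1), and the argument collapses. Second, restricting $S$ to primes $\ell$ cannot work in general, because the hypotheses only exclude $0$ and $1$ as roots, so some $t_j$ may be a root of unity. If, say, $t_j=-1$, then for every odd prime $\ell$ the element $-1\in F$ is itself an $\ell$-th root of $t_j$ and is fixed by the entire Galois group, so condition (1) of Lemma \ref{Tchebotarev} fails for all odd $\ell$ (equivalently, $T^\ell+1$ has the root $-1$ modulo every prime); one is forced to use suitable composite $k$. Your earlier Kummer-theoretic sketch also assumes the $t_j$ are not roots of unity and that their classes are independent modulo $\ell$-th powers, neither of which is granted.

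For comparison, the paper avoids both pitfalls: it first reduces to the case where all roots lie in $F$ (base-changing to the splitting field $L$ and then descending via implication (1) $\Rightarrow$ (3) of Lemma \ref{Tchebotarev}), and then proves that case by induction on the degree. The degree-one case is split according to whether the root is a root of unity (where $k$ is chosen with all prime factors dividing the order $N$, so that every $k$-th root is a primitive $Nk$-th root of unity and a nontrivial automorphism fixing one would fix all) or not (where $T^k-t$ is irreducible for almost all primes $k$ by Capelli plus a height/Northcott argument). The induction step then takes $k$ to be a common multiple of the exponents for the individual roots and, crucially, corrects the lifted element when it accidentally fixes a root of the new factor (the $\sigma\hat\tau$ and $\hat\tau b^{-m}$ manipulations), which is exactly the fixed-point issue your construction does not address. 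If you want to salvage your route, you would need (i) to replace your $\sigma$ by an element of $\Gal(L_\ell/L_1(\zeta_\ell))$ pairing nontrivially (in the Kummer sense) with every $t_j$, available for large $\ell$ when no $t_j$ is a root of unity since a vector space over $\Ff_\ell$ with $\ell>r$ is not a union of $r$ hyperplanes, and (ii) a separate treatment, with composite $k$, of root-of-unity roots and of how to combine them with the others---at which point you have essentially reconstructed the paper's induction.
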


\begin{remark} \label{remark F'}
(1) If either 0 or 1 is a root of $P(T)$, then the conclusion of Proposition \ref{proposition 3} clearly fails.

\vspace{0.5mm}

\noindent
{\rm{(2)}} The set $S$ depends on the polynomial $P(T)$ and this dependence cannot be removed. Indeed, given an integer $k \geq 1$, all non-zero prime ideals of $O_{F}$ are prime divisors of $P(T)$ and $P(T^k)$ if $P(T)= T-2^k$.

\vspace{0.5mm}

\noindent
{\rm{(3)}} Similarly, the set $S$ depends on the number field $F$ and this dependence cannot be removed. Indeed, given a number field $F'$ containing $F$ and an integer $k \geq 1$, all but finitely many prime ideals of $O_{F'}$ are prime divisors of $P(T)$ and $P(T^k)$ if $F'$ contains a root of $P(T^k)$.
\end{remark}

Proposition \ref{proposition 3} is proved in \S4.

\subsubsection{Conclusion}

As already said, the monic separable polynomial $P_E(T)$ has coefficients in $O_{F'}$. Moreover, by condition (bp-1), one has $P_E(0) \not=0$ and $P_E(1) \not=0$. Then, by Proposition \ref{proposition 3} (applied over $F'$), there exists an infinite set $S$ of positive integers $k$ (depending on $F'$; see Remark \ref{remark F'}) such that, for each $k \in S$, there exist infinitely many non-zero prime ideals of $O_{F'}$ each of which is a prime divisor of $P_E(T)$ but not of $P_E(T^k)$. It then remains to apply Lemma \ref{lemma 2} to conclude.

\section{Proof of Proposition \ref{proposition 3}}

This section is organized as follows. In \S4.1, we state Proposition \ref{prop partiel} which is Proposition \ref{proposition 3} for polynomials whose roots all are in the base number field. Next, we explain in \S4.2 how deducing Proposition \ref{proposition 3} from Proposition \ref{prop partiel}. Finally, Proposition \ref{prop partiel} is proved in \S4.3.

\subsection{Statement of Proposition \ref{prop partiel}}

\begin{proposition} \label{prop partiel}
Given a number field $F$, let $P(T) \in O_F[T]$ \footnote{As before, $O_F$ denotes the integral closure of $\Zz$ in $F$.} be a monic separable polynomial whose roots all are in $F \setminus \{0,1\}$. Then there exist infinitely many positive integers $k$ such that the Galois group of $P(T^k)$ over $F$ has an element fixing no root of $P(T^k)$.
\end{proposition}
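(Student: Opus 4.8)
The plan is to reduce to the case of a single linear factor $P(T) = T - a$ with $a \in F \setminus \{0,1\}$, and then to find infinitely many $k$ for which some element of $\mathrm{Gal}(P(T^k)/F)$, i.e. of $\mathrm{Gal}(F(\zeta_k, \sqrt[k]{a})/F)$ where the $k$-th roots are suitably chosen, fixes none of the $k$ roots $\zeta_k^l\sqrt[k]{a}$. Reducing to one factor is legitimate: if $P = \prod_j (T - a_j)$ and for a given $k$ we have, for each $j$, an automorphism $\sigma_j$ of the splitting field $L_j$ of $T^k - a_j$ over $F$ fixing no root of $T^k - a_j$, these need not glue to a single automorphism of the compositum. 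So instead I would argue as follows: it suffices to produce a $k$ and a single index $j$ for which $\mathrm{Gal}(L_k/F)$ — with $L_k$ the splitting field of $P(T^k)$, which contains $L_j$ — has an element whose restriction to $L_j$ fixes no root of $T^k - a_j$; such an element automatically fixes no root of $P(T^k)$, since a root of $P(T^k)$ is a root of some $T^k - a_i$, and restricting to the relevant factor's splitting field... — actually this last point needs care, which is exactly where the main work lies. The cleanest route is to pick $k$ so that one particular Kummer extension $F(\zeta_k,\sqrt[k]{a_1})/F$ is as large as possible and ``dominates'' the others, so that an element acting without fixed root on the $T^k - a_1$-roots can be chosen compatibly. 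In fact it is enough to show the statement when $r = 1$ and then appeal to a pigeonhole/independence argument; but let me instead concede that the honest reduction will come in \S4.2 (Proposition \ref{proposition 3} is deduced from Proposition \ref{prop partiel} there), so here I only treat $P(T) = \prod_{j=1}^r (T - a_j)$, $a_j \in F \setminus \{0,1\}$ pairwise distinct, directly.

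For the single-root heart of the matter, fix $a \in F\setminus\{0,1\}$ and a prime $p$ not dividing any troublesome quantity, and take $k = p$. The roots of $T^p - a$ are $\zeta_p^l \alpha$, $l = 0,\dots,p-1$, with $\alpha^p = a$; the splitting field is $M = F(\zeta_p, \alpha)$. An element $\sigma \in \mathrm{Gal}(M/F)$ fixes the root $\zeta_p^l\alpha$ iff $\sigma(\zeta_p^l\alpha) = \zeta_p^l\alpha$. Write $\sigma(\zeta_p) = \zeta_p^c$ and $\sigma(\alpha) = \zeta_p^d\alpha$ (possible once $\zeta_p \in M$). Then $\sigma(\zeta_p^l\alpha) = \zeta_p^{cl + d}\alpha$, which equals $\zeta_p^l\alpha$ iff $cl + d \equiv l \pmod p$, i.e. $(c-1)l \equiv -d \pmod p$. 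If $c \not\equiv 1 \pmod p$, this has exactly one solution $l$, so $\sigma$ fixes exactly one root — not good enough. So I want $c \equiv 1$, i.e. $\sigma$ fixing $\zeta_p$, together with $d \not\equiv 0$: then $\sigma$ fixes no root. Such $\sigma$ exists iff $\mathrm{Gal}(M/F(\zeta_p))$ is non-trivial, i.e. iff $\alpha \notin F(\zeta_p)$, i.e. iff $T^p - a$ has no root in $F(\zeta_p)$. By a classical Kummer-theoretic criterion, $T^p - a$ is irreducible over $F(\zeta_p)$ unless $a$ is a $p$-th power in $F(\zeta_p)$. So the task becomes: \emph{for infinitely many primes $p$, $a$ is not a $p$-th power in $F(\zeta_p)$.} This holds for all but finitely many $p$: if $a$ were a $p$-th power in $F(\zeta_p)$ for infinitely many $p$, a height/degree or valuation argument (e.g. looking at the $\mathcal{P}$-adic valuation of $a$ at a prime where $v_\mathcal{P}(a) \neq 0$ — which exists since $a \neq 0$ and $a$ is not a root of unity, using $a \notin\{0,1\}$, or more robustly $a$ not a root of unity; if $a$ happens to be a root of unity one uses instead that $[F(\zeta_p):F] \to \infty$ and $\zeta_{p}$-torsion forces the equation $a = \beta^p$ to be very restrictive) forces a contradiction. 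One must be slightly careful when $a$ is a root of unity (then $v_\mathcal{P}(a) = 0$ everywhere): here one instead notes $a = \zeta_m$ for some $m$, and $\zeta_m$ is a $p$-th power in $F(\zeta_p) \subseteq F(\zeta_{mp})$ only if $p \mid$ something bounded in terms of $m$ and $[F:\mathbb{Q}]$, again excluding all large $p$. Either way, infinitely many $p$ work for each $a$.

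Now the multi-factor reduction inside \S4.3: given $a_1,\dots,a_r \in F\setminus\{0,1\}$, I would choose a prime $p$ large enough that simultaneously (i) $T^p - a_1$ is irreducible over $F(\zeta_p)$ (possible by the above, applied to $a_1$), and (ii) $p$ is coprime to $\prod_{2\le i \le r}[\![a_i]\!]$ in a sense making the Kummer extensions $F(\zeta_p,\sqrt[p]{a_i})$ for distinct $i$ ``as independent as possible'' — concretely, $p$ chosen so that $[F(\zeta_p, \sqrt[p]{a_1},\dots,\sqrt[p]{a_r}):F(\zeta_p)] = p^{r'}$ with $\sqrt[p]{a_1}$ a genuine factor, which again fails for only finitely many $p$ by Kummer theory applied to the finite set $a_1,\dots,a_r$ and the subgroup they generate in $F^\times/(F^\times)^p$. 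Given such a $p$, the restriction map $\mathrm{Gal}(L_p/F(\zeta_p)) \to \mathrm{Gal}(F(\zeta_p,\alpha_1)/F(\zeta_p))$ is onto, so I can choose $\sigma \in \mathrm{Gal}(L_p/F(\zeta_p)) \subseteq \mathrm{Gal}(L_p/F)$ with $\sigma(\alpha_1) = \zeta_p\alpha_1$. Then $\sigma$ fixes $\zeta_p$, hence (by the single-root computation) fixes no root of $T^p - a_1$; and since the roots of $P(T^p)$ are partitioned according to which factor $T^p - a_i$ they satisfy, I still must rule out that $\sigma$ fixes a root of $T^p - a_i$ for $i \ge 2$. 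That is not automatic — $\sigma$ could move $\alpha_1$ but fix $\alpha_2$. The fix is to choose $\sigma$ more carefully using the independence in (ii): take $\sigma$ with $\sigma(\alpha_i) = \zeta_p\alpha_i$ for \emph{every} $i$ for which $\alpha_i \notin F(\zeta_p)$, and for the (finitely many, $p$-independent once $p$ is large) exceptional $i$ with $\alpha_i \in F(\zeta_p)$ — which can occur only if $a_i \in (F(\zeta_p)^\times)^p$, a condition excluding all large $p$ unless $a_i$ itself is a perfect $p$-th power structurally, impossible for large $p$ — so for $p$ large there are \emph{no} exceptional $i$. Then $\sigma$ fixes $\zeta_p$ and sends every $\alpha_i \mapsto \zeta_p\alpha_i$, hence fixes no root of any $T^p - a_i$, hence no root of $P(T^p)$, and we are done with $k = p$, for each of the infinitely many admissible primes $p$.

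\textbf{Main obstacle.} The delicate point is not the single-root Kummer computation (routine) but controlling the interaction between the $r$ factors: ensuring that one can pick a single automorphism $\sigma$ fixing $\zeta_p$ and moving every $\sqrt[p]{a_i}$ nontrivially. This is where one needs the Kummer-independence statement — that for all large primes $p$, none of the $a_i$ becomes a $p$-th power in $F(\zeta_p)$ — and a uniform (in $p$) version of it for the finite family $\{a_i\}$. Proving that cleanly, distinguishing the case where some $a_i$ is a root of unity from the generic case, and tracking the finitely many bad primes, is the technical core; everything else is Galois theory and Tchebotarev (the latter packaged already in Lemma \ref{Tchebotarev}, which converts ``$\sigma$ fixes no root of $P(T^k)$ but some $\sigma'$ fixes a root of $P(T)$'' into the desired infinitude of prime divisors).
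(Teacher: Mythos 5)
Your single-root Kummer computation and the height argument for non-torsion $a$ are sound (modulo the small slip that a non-root-of-unity $a$ need not have $v_{\mathcal P}(a)\neq 0$ anywhere -- it could be a non-torsion unit -- but the height/Northcott version you also invoke covers this). The genuine gap is your treatment of roots of unity, and it breaks the whole strategy of taking $k$ to be a large prime. If some root $a_i$ of $P$ is a root of unity of order $m$ (e.g. $a_i=-1$, which is allowed since only $0,1$ are excluded), then $a_i=\zeta_m\in F$, and for every prime $p$ coprime to $m$ one has $a_i=(a_i^{s})^{p}$ with $ps\equiv 1 \pmod m$: thus $a_i$ is a $p$-th power already in $F$, hence in $F(\zeta_p)$, for \emph{all} large primes $p$ -- the exact opposite of your claim that this ``excludes all large $p$''. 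Worse, $T^{p}-a_i$ then has a root lying in $F$, which every element of the Galois group of $P(T^p)$ over $F$ fixes, so for such $k=p$ \emph{no} fixed-point-free element exists at all; no cleverer choice of $\sigma$ can repair this. Concretely, for $P(T)=T+1$ over $\Qq$ only even $k$ can work. So the restriction to prime exponents must be abandoned and the exponent must be adapted to the orders of the root-of-unity roots, while simultaneously handling the remaining roots -- and that interaction is precisely the hard part.

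This is what the paper's proof is organized around: Lemma \ref{unity} treats a root of unity $t$ of order $N$ by taking $k$ whose prime factors all divide $N$, so that every root of $T^{k}-t$ is a primitive $Nk$-th root of unity and fixing one root forces fixing all; Lemma \ref{not unity} treats non-torsion $t$ via irreducibility of $T^{k}-t$ for large prime $k$ (Capelli plus Northcott, essentially your height argument); and the factors are combined not by a simultaneous Kummer-independence statement but by induction on the degree, with the statement quantified over \emph{all} number fields so that the new factor can be handled over the splitting field $L$ of the previous ones, a prolongation $\hat\tau$ of the inductively given fixed-point-free $\tau$ being corrected (by $\sigma$, or by $\sigma$ and an element of $\mathrm{Gal}(M/L(\alpha))$) and a common multiple of the two exponents being taken. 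A secondary, fixable point in your multi-factor step: demanding $\sigma(\alpha_i)=\zeta_p\alpha_i$ for every $i$ simultaneously need not define an automorphism when the $a_i$ satisfy multiplicative relations modulo $p$-th powers (e.g. $a_2=a_1^2$); one should instead choose a Kummer character non-vanishing at each $a_i$, which exists once $p>r$ and no $a_i$ is a $p$-th power in $F(\zeta_p)$. But the root-of-unity issue is the real gap, and it cannot be patched within your prime-exponent framework.
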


\subsection{Proof of Proposition \ref{proposition 3} under Proposition \ref{prop partiel}}

Let $F$ be a number field and $P(T) \in O_F[T]$ a monic separable polynomial such that $P(0) \not=0$ and $P(1) \not=0$. Denote the roots of $P(T)$ by $t_1, \dots, t_r$ and the splitting field of $P(T)$ over $F$ by $L$. By Proposition \ref{prop partiel}, there exist infinitely many positive integers $k$ such that
$${\rm{Gal}}(L_k/L) \setminus \bigcup_{j=1}^r \bigcup_{l=0}^{k-1} {\rm{Gal}}(L_k/L(\zeta_k^l \sqrt[k]{t_j}))$$
contains some $\sigma_k$,
where $L_k$ is the splitting field over $L$ of $P(T^k)$, $\zeta_k$ is a primitive $k$-th root of unity and $\sqrt[k]{t_j}$ is a given $k$-th root of $t_j$ ($j=1,\dots,r$). For each positive integer $k$, the splitting field of $P(T^k)$ over $F$ is equal to $L_k$. Then $\sigma_k$ lies in
$$\bigcup_{j=1}^r {\rm{Gal}}(L_k/F({t_j})) \setminus \bigcup_{j=1}^r \bigcup_{l=0}^{k-1} {\rm{Gal}}(L_k/F(\zeta_k^l \sqrt[k]{t_j})).$$
It then remains to use implication (1) $\Rightarrow$ (3) of Lemma \ref{Tchebotarev} to conclude.

\subsection{Proof of Proposition \ref{prop partiel}}
 
We proceed by induction on the degree of the polynomial $P(T)$.

\subsubsection{The case where $P(T)$ has degree 1}

Let $F$ be a number field and $t \in O_F \setminus \{0,1\}$. The conclusion of Proposition \ref{prop partiel} for the polynomial $T^k-t$ easily follows from Lemmas \ref{unity} and \ref{not unity} below\footnote{In the case where $t$ is not a root of unity, one makes use of the following classical lemma: if a given finite group $G$ acts transitively on a given finite set $X$ with cardinality at least 2, then there exists $g \in G$ such that $g . x = x$ for no $x \in X$.}.

\begin{lemma} \label{unity}
Assume that $t$ is a root of unity. For each number field $L$ containing $F$, there exist infinitely many integers $k \geq 1$ such that the Galois group of $T^k-t$ over $L$ is not trivial and each non-trivial element of the Galois group of $T^k-t$ over $F$ fixes no root of this polynomial.
\end{lemma}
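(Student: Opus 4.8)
The plan is to exploit the fact that, since $t$ is a root of unity different from $1$ (and from $0$), it is a primitive $m$-th root of unity for some integer $m \geq 2$, and hence \emph{all} roots of $T^k-t$ are again roots of unity. Concretely, fix a primitive $mk$-th root of unity $\zeta_{mk}$ with $\zeta_{mk}^k = t$; this is possible because the $k$-th power map carries the primitive $mk$-th roots of unity onto the primitive $m$-th roots of unity. Then the roots of $T^k-t$ are exactly the $\zeta_{mk}^{\,a}$ with $a \equiv 1 \pmod m$, and one of them (namely $\zeta_{mk}$ itself) is primitive. Consequently, for any field $K \supseteq F$ the splitting field of $T^k-t$ over $K$ is the cyclotomic field $K(\zeta_{mk})$; in particular $\mathrm{Gal}(T^k-t / F) \cong \mathrm{Gal}(F(\zeta_{mk})/F)$, an abelian group.

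With this in hand, the first requirement becomes transparent: the Galois group of $T^k-t$ over $L$ is $\mathrm{Gal}(L(\zeta_{mk})/L)$, which is non-trivial exactly when $\zeta_{mk} \notin L$. Since $[\Qq(\zeta_{mk}):\Qq] = \varphi(mk) \to \infty$ while $[L:\Qq]$ is fixed, this holds for all sufficiently large $k$, so it will rule out only finitely many of the values of $k$ I select below.

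For the second requirement I would first reformulate it group-theoretically, and this is where the precise wording (\emph{every} non-trivial element fixes no root) is essential. For a root $\beta$ of $T^k-t$, the elements of $\Gamma := \mathrm{Gal}(F(\zeta_{mk})/F)$ fixing $\beta$ form the subgroup $\mathrm{Gal}(F(\zeta_{mk})/F(\beta))$, which is trivial precisely when $F(\beta) = F(\zeta_{mk})$. Hence ``each non-trivial element of $\Gamma$ fixes no root'' is equivalent to saying that \emph{every} stabilizer is trivial, i.e. that $F(\beta) = F(\zeta_{mk})$ for every root $\beta$ (not merely that some single element acts without fixed points). This holds as soon as each root $\beta = \zeta_{mk}^{\,a}$ is a primitive $mk$-th root of unity, for then $\beta$ generates $\Qq(\zeta_{mk})$, whence $F(\beta) = F \cdot \Qq(\beta) = F(\zeta_{mk})$.

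It then remains to choose $k$ so that \emph{every} root is primitive, i.e. $\gcd(1+jm,\, mk) = 1$ for all $j$. The only genuinely arithmetic observation is that this \emph{fails} whenever $k$ has a prime factor $p \nmid m$: then $1+jm$ runs through all residues modulo $p$ as $j$ varies over $0,\dots,k-1$, so some root is non-primitive. I would therefore take $k$ supported only on the primes dividing $m$, for instance $k = m^n$. For such $k$ every prime $p \mid mk$ divides $m$, so $1+jm \equiv 1 \not\equiv 0 \pmod p$ and hence $\gcd(1+jm,\,mk)=1$ for all $j$; thus the second requirement holds for every $k = m^n$. Combining with the first requirement (valid for all large $n$) yields infinitely many admissible $k$, completing the argument. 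The main obstacle is exactly this matching of the two constraints: forcing all roots to be primitive restricts $k$ to powers built from the prime divisors of $m$, and one must verify that this smaller family is still infinite and still eventually escapes the fixed field $L$ — both of which are clear for $k=m^n$.
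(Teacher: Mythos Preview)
Your proof is correct and follows essentially the same route as the paper: both restrict $k$ to integers whose prime factors all divide $m$ (the paper's $N$), observe that for such $k$ every root of $T^k-t$ is a primitive $mk$-th root of unity, and deduce the two requirements from this (the splitting field being $F(\zeta_{mk})$ forces all stabilizers to be trivial, and its degree grows so eventually it is not contained in $L$). The only cosmetic differences are that you spell out the identification of the splitting field and the gcd computation explicitly, and you specialize to $k=m^n$ rather than allowing arbitrary $k$ supported on the primes of $m$.
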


\begin{proof}
Assume that $t$ is a primitive $N$-th root of unity. Let $L$ be a number field containing $F$ and $k$ a positive integer whose prime factors all are prime factors of $N$. As $t \not=1$, one has $N \geq 2$ and there exist infinitely many such integers $k$. Assume that the Galois group of $T^k-t$ over $L$ is trivial. Then $L$ contains a primitive $k$-th root of unity, which cannot happen if $k$ is sufficiently large (depending on $L$). One may then assume that the Galois group of $T^k-t$ over $L$ is not trivial. In particular, the Galois group of $T^k-t$ over $F$ is not trivial either. Let $\sigma$ be a non-trivial element of the latter Galois group. Assume that $\sigma$ fixes at least one root of $T^k-t$. By the definition of $k$, each root of $T^k-t$ is a primitive $(Nk)$-th root of unity. This implies that $\sigma$ fixes each root of $T^k-t$, which cannot happen.
\end{proof}

\begin{lemma} \label{not unity}
Assume that $t$ is not a root of unity. Then $T^k-t$ is irreducible over $F$  for all but finitely many prime numbers $k$.
\end{lemma}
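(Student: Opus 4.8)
The plan is to exploit the classical criterion for irreducibility of binomials $T^k - t$ over a field, applied uniformly in $k$, together with the fact that a non-root-of-unity $t \in O_F$ is forced to be "generic" enough that the relevant obstructions only occur for finitely many $k$. Recall the standard theorem (see, e.g., Lang's \emph{Algebra}): for a field $L$ and $t \in L^\times$, the polynomial $T^k - t$ is irreducible over $L$ if and only if (i) $t$ is not a $p$-th power in $L$ for any prime $p \mid k$, and (ii) if $4 \mid k$, then $t \notin -4L^4$. So I would reduce the statement to showing that each of these finitely-many-constraints-per-$k$ can fail only for finitely many primes $k$.

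Here is how I would run it. First handle condition (ii): since we are restricting to $k$ \emph{prime}, the case $4 \mid k$ never arises, so (ii) is vacuous. Thus for $k$ a prime number, $T^k - t$ is irreducible over $F$ if and only if $t$ is not a $k$-th power in $F$. So it suffices to prove that $t$ is a $k$-th power in $F$ for at most finitely many primes $k$. Suppose not; then there is an infinite set of primes $k$ and elements $u_k \in F$ with $u_k^k = t$. I would pin this down using valuations: since $t \neq 0$ and $t$ is not a unit root of unity, either $t$ is not an algebraic integer unit, in which case some prime ideal $\mathcal{Q}$ of $O_F$ has $v_{\mathcal{Q}}(t) \neq 0$, and then $k \mid v_{\mathcal{Q}}(t)$ for every such $k$, forcing $v_{\mathcal{Q}}(t) = 0$ (contradiction) once $k$ exceeds $|v_{\mathcal{Q}}(t)|$; or $t$ is a unit in $O_F$ that is not a root of unity. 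In the latter case, by Dirichlet's unit theorem $O_F^\times$ is a finitely generated abelian group of rank $\geq 1$, so $O_F^\times / (O_F^\times)_{\mathrm{tors}} \cong \Zz^s$ with $s \geq 1$, and the image of $t$ in this lattice is nonzero (as $t$ is not a root of unity); a nonzero vector in $\Zz^s$ is divisible by only finitely many primes, so $t \in (O_F^\times)^k$ for at most finitely many $k$. Combining the two cases gives the claim.

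The one step that needs a little care — and which I'd single out as the main (minor) obstacle — is the passage from "$t$ is a $k$-th power" to a divisibility statement in a finitely generated abelian group, i.e. organizing the unit-group and ideal-factorization arguments into a single clean statement. A cleaner uniform route, which I would actually prefer to write up, is: consider the finitely generated subgroup $\Gamma := \langle t \rangle \cdot (O_F^\times)$ inside $F^\times$; modulo torsion it is a free $\Zz$-module of finite rank, the class of $t$ in $\Gamma / \Gamma_{\mathrm{tors}}$ is nonzero (precisely because $t$ is not a root of unity — here I'd note $\Gamma_{\mathrm{tors}} = (O_F^\times)_{\mathrm{tors}} = \mu(F)$ since any torsion element of $\Gamma$ is a unit of finite order), and a nonzero element of a free $\Zz$-module lies in $p(\Gamma/\Gamma_{\mathrm{tors}})$ for only finitely many primes $p$. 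Hence $t$ fails to be a $k$-th power in $F$ — equivalently $T^k - t$ is irreducible over $F$ — for all but finitely many prime numbers $k$, as required.
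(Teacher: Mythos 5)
Your main argument is correct, and it takes a genuinely different route from the paper for the key arithmetic step. Both you and the paper first reduce, via the Capelli/binomial irreducibility criterion (for prime $k$ the condition $t\notin -4F^4$ is vacuous), to showing that $t$ is a $k$-th power in $F$ for only finitely many primes $k$. The paper then finishes with heights: if $t=x_k^k$ then $h(t)=k\,h(x_k)$, and since $t$ is neither $0$ nor a root of unity one has $h(t)\neq 0$, so infinitely many such $k$ would produce infinitely many elements of $F$ of height at most $h(t)$, contradicting Northcott's theorem. Your two-case argument (a prime ideal $\mathcal{Q}$ with $v_{\mathcal{Q}}(t)\neq 0$ forces $k\mid v_{\mathcal{Q}}(t)$; and for $t\in O_F^\times$ not a root of unity, finite generation of $O_F^\times$ and the nonvanishing of the image of $t$ in $O_F^\times/\mu(F)$ do the job) is a more elementary substitute for the height machinery and is complete, up to the easy remark, worth making explicit, that in the unit case a $k$-th root $u_k\in F$ of $t$ is automatically in $O_F^\times$ (all its valuations are $v_{\mathcal{Q}}(t)/k=0$), so that ``$t$ is a $k$-th power in $F$'' really does mean $t\in (O_F^\times)^k$.

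However, the ``cleaner uniform route'' you say you would actually write up has a gap: from $t=u_k^k$ with $u_k\in F^\times$ you cannot conclude that the class of $t$ lies in $k\,(\Gamma/\Gamma_{\mathrm{tors}})$ for $\Gamma=\langle t\rangle\cdot O_F^\times$, because $u_k$ need not belong to $\Gamma$ when $t$ is not a unit. Concretely, for $F=\Qq$, $t=4$, $k=2$ one has $\Gamma=\{\pm 4^n\}$, the class of $4$ generates $\Gamma/\Gamma_{\mathrm{tors}}\cong\Zz$ and is not divisible by $2$ there, yet $4$ is a square in $\Qq$; so divisibility in $\Gamma/\Gamma_{\mathrm{tors}}$ does not capture being a $k$-th power in $F$. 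The repair is standard: take $\Gamma$ to be the group of $S$-units $O_{F,S}^\times$ where $S$ is the (finite) set of primes dividing $t$; then any $k$-th root of $t$ lying in $F$ is itself an $S$-unit, $\Gamma$ is finitely generated by the Dirichlet $S$-unit theorem, the class of $t$ in $\Gamma/\Gamma_{\mathrm{tors}}$ is nonzero since $t$ is not a root of unity, and the finite-divisibility argument goes through. With that modification (or simply keeping your original two-case version) your proof is sound.
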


\begin{proof}
By the Capelli lemma (see {\it{e.g.}} \cite[Chapter VI, \S9, Theorem 9.1]{Lan02}), it suffices to show that, for all but finitely many prime numbers $k$, $t$ is not a $k$-th power in $F$. Denote the absolute logarithmic Weil height on $\overline{\Qq}$ by $h$. Assume that there exist infinitely many integers $k \geq 1$ such that there exists $x_k \in F$ satisfying $t=x_k^k$. One then has
$$h(t) = h(x_k^k) =k \cdot {h(x_k)}.$$
As $t$ is not a root of unity and $t \not=0$, one has $h(t) \not=0$. Hence $F$ contains infinitely many elements each of which has height bounded by $h(t)$, which cannot happen \cite[Theorem 1]{Nor49}. 
\end{proof}

\subsubsection{End of the proof of Proposition \ref{prop partiel}}

Let $r$ be a positive integer. Assume that the following condition holds:

\vspace{1mm}

\noindent
(H) {\it{For each number field $F$ and each monic separable degree $r$ polynomial $P(T) \in O_F[T]$ whose roots all are in $F \setminus \{0,1\}$, there exist infinitely many  positive integers $k$ such that the Galois group of $P(T^k)$ over $F$ has an element fixing no root of $P(T^k)$.}}

\vspace{1mm}

\noindent
Let $F$ be a number field and let $P(T) \in O_F[T]$ be a monic separable degree $r+1$ polynomial whose roots all are in $F \setminus \{0,1\}$. Denote the roots of $P(T)$ by $t_1, \dots, t_r, t_{r+1}$. By condition (H), there exists an integer $k_0 \geq 1$ such that the Galois group of $ (T^{k_0} - t_1) \cdots (T^{k_0}-t_{r})$ over $F$ has an element $\tau$ fixing no root of this polynomial. Denote the splitting field of $ (T^{k_0} - t_1) \cdots (T^{k_0}-t_{r})$ over $F$ by $L$. 

\vspace{2mm}

\noindent
(a) Assume that $t_{r+1}$ is a root of unity. By Lemma \ref{unity}, there is an integer $k_1 \geq 1$ such that the Galois group of $T^{k_1} - t_{r+1}$ over $L$ is not trivial and every non-trivial element of the Galois group of $T^{k_1}- t_{r+1}$ over $F$ fixes no root of this polynomial. Let $\sigma$ be a non-trivial element of the former Galois group. Denote the splitting field of $T^{k_1} - t_{r+1}$ over $L$ by $M$ and let $\hat{\tau} \in {\rm{Gal}}(M/F)$ be a prolongation of $\tau$ to $M$.

First, assume that $\hat{\tau}$ fixes no root of $T^{k_1} - t_{r+1}$. Then $\hat{\tau}$ is an element of  the Galois group of $ (T^{k_0} - t_1) \cdots (T^{k_0}-t_{r}) \cdot (T^{k_1} - t_{r+1})$ over $F$ fixing no root of this polynomial. Given a positive multiple $k$ of $k_0$ and $k_1$, every prolongation of $\hat{\tau}$ to the splitting field $M_k$ over $F$ of $P(T^k)$ is an element of ${\rm{Gal}}(M_k/F)$ fixing no root of this polynomial. Hence the desired conclusion holds.

Now, assume that $\hat{\tau}$ fixes a root of $T^{k_1} - t_{r+1}$. By the definition of $k_1$, $\hat{\tau}$ fixes each root of $T^{k_1} - t_{r+1}$. Consider the element $\sigma \hat{\tau}$ of ${\rm{Gal}}(M/F)$. If $x$ denotes any $k_0$-th root of $t_1, \dots, t_r$, then $\hat{\tau}(x)$ still is a $k_0$-th root of $t_1, \dots, t_r$. By the definition of $\sigma$, one then has $\sigma \hat{\tau} (x) = \hat{\tau}(x)$, which is not equal to $x$ by the definition of $\hat{\tau}$. If $x$ denotes any $k_1$-th root of $t_{r+1}$, then, by the above, one has 
$\sigma \hat{\tau}(x) = \sigma(x)$, which is not equal to $x$ by the definition of $\sigma$. Hence $\sigma \hat{\tau}$ is an element of the Galois group of $ (T^{k_0} - t_1) \cdots (T^{k_0}-t_{r}) \cdot (T^{k_1} - t_{r+1})$ over $F$ fixing no root of this polynomial. As before, the desired conclusion easily follows.

\vspace{2mm}

\noindent
(b) Assume that $t_{r+1}$ is not a root of unity. By Lemma \ref{not unity}, $T^{k_1}-t_{r+1}$ is irreducible over $L$ for some prime $k_1$. As before, denote the splitting field of $T^{k_1} - t_{r+1}$ over $L$ by $M$ and let $\hat{\tau} \in {\rm{Gal}}(M/F)$ be a prolongation of $\tau$ to $M$. Let $\sigma$ be an element of ${\rm{Gal}}(M/L)$ fixing no root of $T^{k_1}-t_{r+1}$. If $\hat{\tau}$ fixes no root of $T^{k_1} - t_{r+1}$, then one gets the desired conclusion as in (a) above. We may then assume that $\hat{\tau}(\alpha)=\alpha$ for some root $\alpha$ of $T^{k_1}-t_{r+1}$. Let $\zeta$ be a primitive $k_1$-th root of unity. Up to making the prime number $k_1$ sufficiently large (depending on $L$), we may assume that $L$ and $\Qq(\zeta)$ are linearly disjoint over $\Qq$. Then $L(\zeta)/L$ has degree $k_1-1$. Hence $L(\alpha)$ and $L(\zeta)$ are linearly disjoint over $L$ (as $L(\alpha)/L$ has degree $k_1$). As a consequence, the Galois group ${\rm{Gal}}(M/L(\alpha))$ is generated by some element $b$ satisfying $b(\alpha)= \alpha$ and $b(\zeta)=\zeta^e$ for some $e \in \mathbb{N}$. Consider the restriction $w$ of $\hat{\tau}$ to $F(\zeta)$. One has $w=c^m$ for some integer $m$, where $c$ is the generator of ${\rm{Gal}}(F(\zeta)/F)$ defined by $c(\zeta)=\zeta^e$. Then $\hat{\tau} b^{-m}$ is a prolongation of $\tau$ to $M$ which fixes each root of $T^{k_1}-t_{r+1}$ (as $\hat{\tau}(\alpha)=\alpha$). As in (a) above, one shows that $\sigma \hat{\tau} b^{-m}$ is an element of the Galois group over $F$ of $ (T^{k_0} - t_1) \cdots (T^{k_0}-t_{r}) \cdot (T^{k_1} - t_{r+1})$ fixing no root of this polynomial, thus ending the proof.

\section{A geometric variant}

The aim of this section is Proposition \ref{prop main precise} below which makes \cite[Corollary 5.2]{Leg15} more precise (this result is recalled as Lemma \ref{lemma 0} below).

\subsection{Statement of Proposition \ref{prop main precise}}

\begin{proposition} \label{prop main precise}
Let $G$ be a non-trivial finite group, not a cyclic $p$-group. Then there exist a number field $F_G$ and an $F_G$-regular Galois extension $E/F_G(T)$ with Galois group $G$ such that the following holds:

\vspace{1mm}

\noindent
{\rm{(geometric non-$G$-parametricity)}} {\it{for every finite extension $F'/F_G$, there exist infinitely many linearly disjoint Galois extensions of $F'$ with Galois group $G$ each of which is not a specialization of $EF'/F'(T)$ \footnote{As in the (non-$G$-parametricity) condition, the realizations of $G$ whose existence is claimed may be produced by specialization.}.}}
\end{proposition}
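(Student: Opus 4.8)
The plan is to reduce Proposition \ref{prop main precise} to Theorem \ref{thm main 2} by exploiting the freedom we have in choosing the ground field, so that the dependence on $F'$ in the set of admissible integers $k$ can be absorbed. I would proceed as follows.

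\textbf{Step 1: Choose a starting realization with controlled branch points.} Since $G$ is a non-trivial finite group, by the Riemann existence theorem $G$ is a regular Galois group over some number field; enlarging it, I may fix a number field $F_0$ and an $F_0$-regular Galois extension $E_0/F_0(T)$ with Galois group $G$, branch points $t_1,\dots,t_r$, satisfying conditions (bp-1) and (bp-2) after a change of variable. Since $G$ is not a cyclic $p$-group, the criterion of \cite[Corollary 5.2]{Leg15} (Lemma \ref{lemma 0}) — and more precisely the group-theoretic input of \S5 — will be what allows the geometric (rather than merely arithmetic) conclusion. The key point I want is that for this particular realization the relevant Galois group $\mathrm{Gal}(L_k/F_0)$ of $P_{E_0}(T^k)$ acquires, for a suitable $k$, an element fixing no root, \emph{independently} of the finite extension $F'/F_0$ we later pass to.

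\textbf{Step 2: Fix a single integer $k$ that works uniformly.} This is the crux. In Theorem \ref{thm main 2} the integer $k$ produced by Proposition \ref{proposition 3} depends on $F'$ because passing to $F'$ may add roots of $P_{E_0}(T^k)$ (Remark \ref{remark F'}(3)). To avoid this, I would use the non-cyclic-$p$-group hypothesis: it guarantees (via Lemma \ref{Reiter} / the argument of \S5) that one can arrange an element $\sigma$ of $\mathrm{Gal}(L_k/F_0)$ fixing no root of $P_{E_0}(T^k)$ whose non-vanishing property is \emph{stable under restriction}, i.e.\ survives in $\mathrm{Gal}(L_kF'/F')$ for every finite $F'/F_0$. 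Concretely, I would look for $\sigma$ lying in a subgroup of $\mathrm{Gal}(L_k/\mathbb{Q})$ that is not affected by base change (e.g.\ an element whose action on the roots decomposes them into orbits of size $>1$ forced by the cyclotomic/Kummer structure, as in Lemma \ref{unity} where every root of $T^k-t$ is a primitive $(Nk)$-th root of unity). Once such a $\sigma$ exists for one $k=k_G$, I set $E := E_{k_G}$ from Lemma \ref{lemma 1}, defined over $F_G := F_0$.

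\textbf{Step 3: Transport the conclusion to every $F'/F_G$.} For any finite extension $F'/F_G$, apply the restriction of $\sigma$ to $\mathrm{Gal}(L_{k_G}F'/F')$; by Step 2 it still fixes no root of $P_{E_0}(T^{k_G})$ over $F'$, so by implication (1) $\Rightarrow$ (3) of Lemma \ref{Tchebotarev} there are infinitely many prime ideals of $O_{F'}$ that are prime divisors of $P_{E_0}(T)$ but not of $P_{E_0}(T^{k_G})$. Then Lemma \ref{lemma 2} applies verbatim with the fixed $k=k_G$, yielding infinitely many linearly disjoint Galois extensions of $F'$ with group $G$, produced by specializing $E_0F'/F'(T)$, none of which is a specialization of $EF'/F'(T)$. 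This is exactly the (geometric non-$G$-parametricity) condition.

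\textbf{Main obstacle.} The real difficulty is Step 2: producing a single integer $k_G$ whose witnessing automorphism $\sigma$ has the no-fixed-root property that persists after \emph{arbitrary} finite base change. The inductive construction of \S4.3 chooses $k$ depending on the field at each stage, and Remark \ref{remark F'}(3) shows this cannot be fixed for a general polynomial; it is precisely the hypothesis that $G$ is not a cyclic $p$-group — channelled through Reiter's group-theoretic lemma — that one must exploit to pin down such a uniform $k_G$ and to ensure the element $\sigma$ comes from a part of the Galois group robust under base change. Making that robustness precise, and checking it is compatible with the branch-point normalizations (bp-1)–(bp-2), is where the careful work lies.
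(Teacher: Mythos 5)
Your Step 2 is not a ``careful work'' issue that can be fixed; as formulated, it cannot work. The no-fixed-root condition you want to make stable under base change concerns an element of ${\rm{Gal}}(L_k/F_0)$, where $L_k$ is the splitting field of $P_{E_0}(T^k)$ --- a group that has nothing to do with $G$, so the hypothesis that $G$ is not a cyclic $p$-group (and Lemma \ref{Reiter}, which is purely about conjugacy classes of $G$) gives no leverage on it. Moreover, for \emph{any} fixed $k_G$, the finite extension $F'=F_G(\sqrt[k_G]{t_1})$ contains a root of $P_{E_0}(T^{k_G})$, so every element of ${\rm{Gal}}(L_{k_G}F'/F')$ fixes a root; hence condition (1) of Lemma \ref{Tchebotarev} fails over such an $F'$, all but finitely many primes of $O_{F'}$ divide $P_{E_0}(T^{k_G})$ (Remark \ref{remark F'}(3)), and the hypothesis of Lemma \ref{lemma 2} cannot be met. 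This is exactly the obstruction the paper records in Remark \ref{rk 4.2}(2): the prime-divisor criterion of \S3 intrinsically cannot yield an integer $k$ valid for all $F'$, so your Step 3 never gets off the ground.

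The paper proves Proposition \ref{prop main precise} by a different mechanism altogether: it invokes \cite[Corollary 5.2]{Leg15} (Lemma \ref{lemma 0}), whose sufficient criterion is the group-theoretic condition (H2) on conjugacy classes of $G$ (there is a generating set of classes $C_1,\dots,C_r$ and a further class $C$ not among their powers); because (H2) is intrinsic to $G$, the resulting obstruction --- built from inertia data rather than from prime divisors of the branch-point polynomial --- survives every finite base change $F'/F_G$, which is what makes the conclusion geometric. The hypothesis that $G$ is not a cyclic $p$-group enters only through Lemma \ref{Reiter} (Reiter's argument), which shows (H2) holds precisely for such $G$; combining the two lemmas finishes the proof. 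If you want to salvage your write-up, you should abandon the attempt to uniformize $k$ in Theorem \ref{thm main 2} and instead verify (H2) under the stated hypothesis and cite the criterion from \cite{Leg15}.
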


\noindent
Unlike the result in part (1) of Remark \ref{rk 4.2}, it seems unclear whether a number field $F_G$ as in Proposition \ref{prop main precise} may be specified for a given group $G$ \footnote{{\it{i.e.}}, being a regular Galois group over a given number field $F$ might not be sufficient to take $F_G=F$.}. See \cite[\S7]{Leg15} where this is done in some specific cases.

\subsection{Proof of Proposition \ref{prop main precise}}

Let $G$ be a non-trivial finite group. 

First, recall the following result which is \cite[Corollary 5.2]{Leg15}.

\begin{lemma} \label{lemma 0}
There exist a number field $F_G$ and an $F_G$-regular Galois extension of $F_G(T)$ with Galois group $G$ which satisfies the {\rm{(geometric non-$G$-parametricity)}} condition if the following group theoretic condition holds.

\vspace{1mm}

\noindent
{\rm{(H2)}} There exists a set $\{C, C_1,\dots,C_r\}$ of non-trivial conjugacy classes of $G$ such that the elements of $C_1,\dots,C_r$ generate $G$ and the remaining conjugacy class $C$ is not in the set $\{C_1^a , \dots, C_r^a\, / \,   a \in \mathbb{N} \}$. 
\end{lemma}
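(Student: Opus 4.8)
The plan is to establish Lemma~\ref{lemma 0} by running, in the geometric (uniform-in-$F'$) regime, the comparison-of-specialization-sets mechanism of \cite{Leg16c} and \cite{Leg15} recalled in the Introduction. From the data $\{C,C_1,\dots,C_r\}$ of condition~(H2) I would manufacture two $F_G$-regular Galois extensions of $F_G(T)$ with group $G$ whose branch point configurations are ``comparable'' through the prime divisors of their branch point polynomials, and then invoke the specialization criterion \cite[Theorem~4.2]{Leg16c} --- the very tool already used in the proof of Lemma~\ref{lemma 2}.

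First I would build the two realizations. Pick a number field $F_G$ large enough to serve as a field of definition for the $G$-covers below and to contain suitable roots of unity. Using the Riemann existence theorem (and descent), produce an $F_G$-regular Galois extension $E/F_G(T)$ with group $G$ whose inertia generators lie in $C_1,\dots,C_r$: since $C_1,\dots,C_r$ generate $G$, a tuple supported on these classes can be completed to a generating tuple of $G$ satisfying the product-one relation. Similarly produce an $F_G$-regular Galois extension $E'/F_G(T)$ with group $G$ having, in addition, a branch point of inertia type $C$ (enlarging the ramification further if the product-one relation forces it). The crucial requirement is that all branch points of $E$ and of $E'$ be chosen among roots of unity: then the branch point polynomials $P_E(T)=\prod_j(T-t_j)$ and $P_{E'}(T)$ are fixed independently of the base field, and the cyclotomic data controlling their prime divisors does not degenerate under base change $F_G\subseteq F'$.

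Next I would run the comparison. By \cite[Theorem~4.2]{Leg16c} applied over $F'$, some $G$-specialization of $E'F'/F'(T)$ fails to be a specialization of $EF'/F'(T)$ as soon as there are infinitely many prime ideals of $O_{F'}$ that are prime divisors of the branch point polynomial of $E'F'$ but not of that of $EF'$. Here the hypothesis ``$C$ is not in $\{C_1^a,\dots,C_r^a\}$'' is exactly what is needed: it forces, through a Tchebotarev density argument in the spirit of Lemma~\ref{Tchebotarev}, the existence of a positive-density set of such primes; and because the branch points were placed among roots of unity, the underlying splitting-field-and-Frobenius computation is literally the same over $F_G$ as over every $F'$, which is how uniformity in $F'$ is obtained. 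A standard twisting and linear-disjointness bootstrap (as in \cite{Leg16c}) then promotes one missing $G$-extension of $F'$ to infinitely many linearly disjoint ones, all produced by specializing $E'F'/F'(T)$; taking $E'$ as the extension whose existence is asserted completes the proof.

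The step I expect to be the main obstacle is precisely the uniformity over $F'$ flagged in Remark~\ref{remark F'}(3): over a larger field a branch point may acquire a root, or become a higher power, and then the supply of good primes dries up --- so the real work is to pin down the ramification data rigidly enough (concretely, branch points among roots of unity) that the prime-divisor count is insensitive to base change. A second, more technical point is the simultaneous realizability of $E$ and $E'$ over one common $F_G$: one must complete a tuple supported on $C_1,\dots,C_r$ (respectively one also using $C$) to a product-one generating tuple of $G$ without enlarging the generated group, which is exactly where the ``$C_1,\dots,C_r$ already generate $G$'' clause of (H2) enters.
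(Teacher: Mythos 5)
This lemma is not proved in the paper at all: it is quoted verbatim from \cite[Corollary 5.2]{Leg15}, and the mechanism behind it there is not the branch-point/prime-divisor criterion you invoke but the specialization inertia theorem (Beckmann's theorem): at every prime of good reduction, a ramified specialization of $EF'/F'(T)$ has inertia generated by an element of some power $C_i^a$ of an inertia class of $E/F_G(T)$; hence any $G$-extension of $F'$ ramified somewhere with inertia in the class $C$ is automatically not a specialization of $EF'/F'(T)$, and such extensions are manufactured over every $F'$ by specializing a second realization having $C$ as an inertia class. That is where hypothesis (H2) actually enters, and it is a constraint on conjugacy classes, which is insensitive to base change --- this is what makes the conclusion \emph{geometric}.

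Your route has a genuine gap at exactly the two points you flag. First, the hypothesis ``$C\notin\{C_1^a,\dots,C_r^a\}$'' concerns inertia classes, while the criterion \cite[Theorem 4.2]{Leg16c} used in Lemma \ref{lemma 2} compares only the \emph{branch points} through the prime divisors of $m_{EF'}\cdot m_{EF'}^*$ and $m_{E'F'}\cdot m_{E'F'}^*$; branch points can be moved at will (change of variable, choice of the covers), independently of the classes $C,C_1,\dots,C_r$, so no Tchebotarev argument can extract the required positive-density set of primes from (H2) --- the class-power condition simply does not see the branch-point polynomials. Second, the uniformity in $F'$ cannot be rescued by placing the branch points at roots of unity: as soon as $F'$ contains the branch points of the target extension $E$ (e.g.\ $F'\supseteq F_G(\zeta_n)$ with your choice), every prime of $O_{F'}$ is a prime divisor of the target's branch-point polynomial, so the set of primes dividing the producing polynomial but not the target one is empty and the criterion yields nothing. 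This is precisely the obstruction recorded in Remark \ref{rk 4.2}(2) and Remark \ref{remark F'}(3): the prime-divisor method proves the (non-$G$-parametricity) condition field by field, with the suitable parameters depending on $F'$, and it cannot produce the (geometric non-$G$-parametricity) condition; for the latter one must switch to the inertia-class invariant as in \cite[Corollary 5.2]{Leg15}.
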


Now, combine Lemmas \ref{lemma 0} and \ref{Reiter} below to get Proposition \ref{prop main precise}.

\begin{lemma} \label{Reiter}
Condition {\rm{(H2)}} fails if and only if $G$ is a cyclic $p$-group.
\end{lemma}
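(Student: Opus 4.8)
The plan is to recast condition (H2) purely in terms of subsets of $G$, and then to settle the two directions by a short dichotomy (cyclic versus non-cyclic), the non-cyclic case resting on a Jordan-type counting argument.

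\textbf{Step 1: reformulation.} For $g\in G$ write $g^G$ for its conjugacy class and $\langle g\rangle^G=\bigcup_{x\in G}\langle g\rangle^{x}$ for the union of all conjugates of the cyclic subgroup $\langle g\rangle$. First I would observe that, for conjugacy classes $C$ and $C_i$ with representative $g_i$, one has $C\in\{C_i^{a}\,/\,a\in\Nn\}$ if and only if $C$ meets $\langle g_i\rangle^{G}$. Hence, if $C_1,\dots,C_r$ are non-trivial classes whose elements generate $G$, then $\{C_1^{a},\dots,C_r^{a}\,/\,a\in\Nn\}$ consists of \emph{all} non-trivial classes precisely when $U:=\bigcup_{i}\langle g_i\rangle^{G}$ equals $G$. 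A little bookkeeping then shows that (H2) holds if and only if there is a subset $U\subsetneq G$ which is conjugation-invariant, \emph{cyclic-closed} (i.e. $g\in U\Rightarrow\langle g\rangle\subseteq U$) and generates $G$: from such a $U$ take the $C_i$ to be the non-trivial classes contained in $U$ and $C$ to be any non-trivial class meeting $G\setminus U$ (so $C\neq C_i$, and powers of the $C_i$ stay inside $U$), and conversely the set $U$ built above from an instance of (H2) has all three properties.

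\textbf{Step 2: the cyclic case.} If $G=\Zz/p^{n}\Zz$, any cyclic-closed $U$ generating $G$ contains a generator $g$, hence $\langle g\rangle=G\subseteq U$; so no proper such $U$ exists and, by Step 1, (H2) fails. If instead $G$ is cyclic but not a $p$-group, pick two distinct primes $p_1\neq p_2$ dividing $|G|$ and let $U$ be the set of non-generators of $G$: it is conjugation-invariant (trivially), cyclic-closed (an element of a proper subgroup is a non-generator), proper (generators exist), and it generates $G$ because it contains the subgroups of index $p_1$ and of index $p_2$, whose join is $G$. By Step 1, (H2) holds.

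\textbf{Step 3: the non-cyclic case.} Let $\mathcal{M}$ be the set of maximal cyclic subgroups of $G$; since $G$ is non-cyclic every $M\in\mathcal{M}$ is proper, and $G=\bigcup_{M\in\mathcal{M}}M$. By Jordan's theorem a finite group is never the union of the conjugates of a single proper subgroup, so $\mathcal{M}$ contains two members $M_1,M_2$ in distinct conjugacy classes. For $M\in\mathcal{M}$ set $X_M=\{g\in G:\langle g\rangle\ \text{is conjugate to}\ M\}$ and $V_M=G\setminus X_M$. Each $V_M$ is conjugation-invariant, cyclic-closed (using that a cyclic subgroup conjugate to $M$ is maximal, hence is the full cyclic span of any of its non-trivial elements), and proper (it omits the generators of $M$). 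The claim is that $V_M$ generates $G$ for some $M\in\mathcal{M}$: otherwise $V_{M_1}$ and $V_{M_2}$ both lie in proper subgroups, so $|X_{M_i}|\geq|G|/2$ for $i=1,2$; but $X_{M_1}\cap X_{M_2}=\emptyset$ and neither contains the identity, whence $|G|\geq|X_{M_1}|+|X_{M_2}|+1>|G|$, a contradiction. For such an $M$, taking $U=V_M$ in Step 1 shows (H2) holds, completing the lemma.

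\textbf{Expected main obstacle.} Everything reduces, via Step 1, to the non-cyclic case, and the only genuinely non-formal point there is the existence of a proper, conjugation-invariant, cyclic-closed generating subset; this is exactly what the Jordan-type counting delivers. The remaining work — verifying cyclic-closedness and properness of $V_M$, handling the cyclic subcases, and translating back and forth between conjugacy classes with their power maps and subsets of $G$ — is routine but must be done with some care.
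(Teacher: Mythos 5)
Your proof is correct, but it follows a genuinely different route from the paper's. The paper argues by contradiction from the failure of (H2): for a maximal subgroup $H$ that is not normal, the classes of the non-trivial elements of $\bigcup_{g\in G}gHg^{-1}$ generate $G$, and the failure of (H2) forces $G=\bigcup_{g\in G}gHg^{-1}$, which is impossible; hence all maximal subgroups are normal and $G$ is nilpotent, a cardinality comparison between $P_1\times\cdots\times P_s$ and $P_1\cup\cdots\cup P_s$ shows $G$ is a $p$-group, and a final covering argument with two distinct maximal subgroups shows the maximal subgroup is unique, so $G$ is cyclic. You instead reformulate (H2) as the existence of a proper, conjugation-invariant, cyclic-closed generating subset $U\subsetneq G$ (your Step 1, which is correct) and then construct such a $U$ directly in every group that is not a cyclic $p$-group: the non-generators when $G$ is cyclic of non-prime-power order, and, when $G$ is non-cyclic, the complement $V_M$ of the set of elements generating a conjugate of a well-chosen maximal cyclic subgroup $M$, using Jordan's covering theorem to get two non-conjugate maximal cyclic subgroups and an index-two counting bound to see that some $V_M$ generates. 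Both arguments ultimately rest on the same covering fact (a finite group is not the union of the conjugates of a proper subgroup), but yours is constructive and avoids the nilpotency/Sylow structure theory, at the price of the preliminary reformulation and a case split; the paper's is shorter granted standard structure results. One small slip worth fixing: your parenthetical justification for cyclic-closedness of $V_M$ (``a cyclic subgroup conjugate to $M$ \dots is the full cyclic span of any of its non-trivial elements'') is false as literally stated (an element of order $2$ in a maximal cyclic subgroup of order $4$ does not span it); what you actually need, and what is true by maximality, is that any cyclic subgroup of $G$ containing a conjugate of $M$ equals it, so if $g^a\in X_M$ then $\langle g^a\rangle=\langle g\rangle$ and $g\in X_M$.
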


\begin{proof}[Proof of Lemma \ref{Reiter}] 
It is not hard to see that condition {\rm{(H2)}} fails if $G$ is a cyclic $p$-group. For the converse, we use the following argument due to Reiter. Assume that condition (H2) fails. Let $H$ be a maximal subgroup of $G$. If $H$ is not a normal subgroup of $G$, one has
\begin{equation} \label{eq 1}
G = \Big\langle \bigcup_{g \in G} \, gHg^{-1} \Big\rangle.
\end{equation}
As condition (H2) fails, \eqref{eq 1} provides $G = \bigcup_{g \in G} \, gHg^{-1},$ which cannot happen. Then each maximal subgroup of $G$ is a normal one. Hence $G$ is nilpotent, {\it{i.e.}}, $G$ is the product of its Sylow subgroups. Set 
\begin{equation} \label{eq 2}
G=P_1 \times \cdots \times P_s
\end{equation}
with $P_1,\dots, P_s$ the Sylow subgroups of $G$. By the Sylow theorems and as condition (H2) has been assumed to fail, \eqref{eq 2} provides
\begin{equation} \label{eq 3}
G= P_1 \cup \cdots \cup P_s.
\end{equation}
If $s \geq 2$, then, by taking cardinalities in \eqref{eq 2} and \eqref{eq 3}, we get 
$$\prod_{i=1}^s |P_i| < \sum_{i=1}^s |P_i|,$$
which cannot happen. Hence $s=1$ and $G$ is a $p$-group. 

Let $H_1$ and $H_2$ be two distinct maximal subgroups of $G$. Then
\begin{equation} \label{eq 4}
G= \langle H_1 \cup H_2 \rangle.
\end{equation}
As $H_1$ and $H_2$ are normal subgroups of $G$ and as condition (H2) fails, \eqref{eq 4} provides $G = \bigcup_{g \in G} g(H_1 \cup H_2)g^{-1}.$ Hence $G=H_1 \cup H_2$. In particular, one has $H_1 \subseteq H_2$ or $H_2 \subseteq H_1$, which cannot happen. Hence $G$ has only one maximal subgroup and is then cyclic, as needed.
\end{proof}

\subsection{A conjectural version of Proposition \ref{prop main precise}}

Recall that \cite{Leg15} also offers a conjectural version of \cite[Corollary 5.2]{Leg15}; see \cite[Corollary 5.3]{Leg15}. Below we provide a similar conjectural version of Proposition \ref{prop main precise} (which then makes \cite[Corollary 5.3]{Leg15} more precise). 

Namely, let $G$ be a non-trivial finite group. Assume that the following conjecture of Fried is satisfied\footnote{See Section III.1 of \url{http://www.math.uci.edu/~mfried/deflist-cov/RIGP.html} or \cite[\S5]{Leg15} for more details.}.

\vspace{2mm}

\noindent
{\bf{Conjecture (Fried).}} {\it{Each set $\{C_1,\dots,C_r\}$ of non-trivial conjugacy classes of $G$ that is rational and such that the elements of $C_1,\dots,C_r$ generate $G$ occurs as the inertia canonical conjugacy class set of some $\mathbb{Q}$-regular Galois extension of $\mathbb{Q}(T)$ with Galois group $G$.}}

\vspace{2mm}

Then, by combining Lemma \ref{Reiter} and \cite[Corollary 5.3]{Leg15}, Proposition \ref{prop main precise} holds with $F_G=\Qq$, {\it{i.e.}}, the following holds.

\begin{proposition} \label{prop}
Assume that $G$ is not a cyclic $p$-group. Then there exists a $\Qq$-regular Galois extension of $\Qq(T)$ with Galois group $G$ that satisfies the {\rm{(geometric non-$G$-parametricity)}} condition.
\end{proposition}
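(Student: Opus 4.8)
The plan is to deduce Proposition \ref{prop} from Lemma \ref{Reiter} together with \cite[Corollary 5.3]{Leg15}, the latter being the analogue of Lemma \ref{lemma 0} conditional on Fried's conjecture: granting Fried's conjecture for $G$, it produces a $\Qq$-regular Galois extension of $\Qq(T)$ with Galois group $G$ satisfying the {\rm{(geometric non-$G$-parametricity)}} condition as soon as condition {\rm{(H2)}} holds. So the only point to settle is that {\rm{(H2)}} holds for our group $G$, and this is exactly the non-trivial implication of Lemma \ref{Reiter}: as $G$ is assumed not to be a cyclic $p$-group, {\rm{(H2)}} holds, so there is a set $\{C, C_1, \dots, C_r\}$ of non-trivial conjugacy classes of $G$ such that the elements of $C_1, \dots, C_r$ generate $G$ and $C \notin \{C_1^a, \dots, C_r^a \ / \ a \in \Nn\}$.

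To make the combination with \cite[Corollary 5.3]{Leg15} explicit, I would first rationalize this data. Let $n$ be the exponent of $G$. I would replace $\{C_1, \dots, C_r\}$ by the union of the orbits of the $C_i$ under the power maps $D \mapsto D^a$, $a \in (\Zz/n\Zz)^\times$, and likewise replace $\{C\}$ by the orbit of $C$; this gives a rational generating set $\{C_1', \dots, C_{r'}'\}$ still containing $C_1, \dots, C_r$ and a rational set $\{D_1, \dots, D_t\}$ containing $C$. These two sets are disjoint, since $C^b = C_i^a$ with $a, b$ prime to $n$ would force $C = C_i^{ab^{-1}}$, against {\rm{(H2)}}; in particular $C \notin \{(C_i')^a \ / \ 1 \le i \le r', \ a \in \Nn\}$, so the non-membership condition survives rationalization. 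I would then apply Fried's conjecture to the rational generating sets $\{C_1', \dots, C_{r'}'\}$ and $\{C_1', \dots, C_{r'}'\} \cup \{D_1, \dots, D_t\}$ to obtain two $\Qq$-regular Galois extensions of $\Qq(T)$ with Galois group $G$ realizing these as inertia canonical conjugacy class sets, and feed this pair into the argument proving \cite[Corollary 5.2]{Leg15} (which is Lemma \ref{lemma 0}): the first extension in the role of the one to be shown non-parametric, the second yielding, by specialization over each finite extension $F'/\Qq$, infinitely many linearly disjoint Galois extensions of $F'$ with Galois group $G$ that are not specializations of the first. This produces a $\Qq$-regular Galois extension of $\Qq(T)$ with Galois group $G$ satisfying the {\rm{(geometric non-$G$-parametricity)}} condition, i.e., Proposition \ref{prop} with $F_G = \Qq$.

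The step I expect to be the main obstacle is not the group theory --- Lemma \ref{Reiter} takes care of it --- but matching the rationalization above with the exact hypotheses of \cite[Corollary 5.3]{Leg15} and of Fried's conjecture: one has to check that passing to power-orbits preserves the ``distinguished conjugacy class'' input required by the non-parametricity criterion of \cite[\S4]{Leg16c}, and that the generation and rationality requirements hold simultaneously for the enlarged class sets. If, as its formulation suggests, \cite[Corollary 5.3]{Leg15} already takes {\rm{(H2)}} itself --- rather than a rational strengthening of it --- as hypothesis, then all of this is internal to \cite{Leg15}, and the proof of Proposition \ref{prop} collapses to a one-line combination: {\rm{(H2)}} holds by Lemma \ref{Reiter} because $G$ is not a cyclic $p$-group, whence \cite[Corollary 5.3]{Leg15} applies.
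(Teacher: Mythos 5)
Your proposal is correct and follows essentially the same route as the paper: the paper's proof is exactly the one-line combination you anticipate at the end, namely that Lemma \ref{Reiter} gives condition (H2) since $G$ is not a cyclic $p$-group, and then \cite[Corollary 5.3]{Leg15} (the conjectural analogue of Lemma \ref{lemma 0}, granted Fried's conjecture) yields the (geometric non-$G$-parametricity) conclusion with $F_G=\Qq$. The intermediate rationalization discussion you include is not needed here, as that work is internal to \cite{Leg15}.
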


\subsection{Other base fields} 

We conclude this paper by noticing that similar statements can be given for other base fields. For example, by conjoining Lemma \ref{Reiter} and \cite[\S5.2]{Leg15}, we obtain the following counterpart of Proposition \ref{prop main precise} for rational function fields.

\begin{proposition} \label{alg clos}
Let $G$ be a non-trivial finite group, not a cyclic $p$-group, $\kappa$ an algebraically closed field of characteristic zero and $X$ an indeterminate such that $T$ is transcendental over $\kappa(X)$. Then, for some Galois extension $E/\overline{\Qq}(T)$ with group $G$, the extension $E \kappa(X)/\kappa(X)(T)$ satisfies the {\rm{(geometric non-$G$-parametricity)}} condition.
\end{proposition}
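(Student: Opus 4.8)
The plan is to import the derivation of Proposition~\ref{prop main precise} from Lemma~\ref{lemma 0}, substituting for the number field $F_G$ the rational function field $\kappa(X)$ and for Lemma~\ref{lemma 0} its function-field counterpart established in \cite[\S5.2]{Leg15}. The first step is purely group-theoretic: by Lemma~\ref{Reiter}, since $G$ is non-trivial and not a cyclic $p$-group, condition {\rm{(H2)}} holds, so one may fix a set $\{C,C_1,\dots,C_r\}$ of non-trivial conjugacy classes of $G$ with $\langle C_1,\dots,C_r\rangle=G$ and $C\notin\{C_1^a,\dots,C_r^a\,/\,a\in\mathbb{N}\}$.

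The second step applies the geometric variant of the strategy of \cite{Leg15}. I would stress that the ``prime divisors of polynomials'' reformulation used in \S3--\S4 of the present paper does \emph{not} survive base change to $\kappa(X)$: every place of $\kappa(X)/\kappa$ has algebraically closed residue field $\kappa$, hence is a prime divisor of every monic polynomial and of all of its $k$-th substitutes simultaneously, so the hypothesis of Lemma~\ref{lemma 2} becomes vacuous. One argues instead with the more direct criterion of \cite{Leg15}, in the shape set up in \cite[\S5.2]{Leg15}: the places of $\kappa(X)/\kappa$ and of its finite extensions $F'$ take over the role played by the prime ideals of $O_{F'}$ in the number field case, and the Riemann existence theorem over the algebraically closed field $\kappa$ supplies, for any prescribed finite branch locus in $\mathbb{P}^1(\kappa)$ and any compatible ramification datum, a $\kappa(X)$-regular realization of $G$ with exactly that branch locus. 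From {\rm{(H2)}} one then builds a Galois extension $E/\overline{\Qq}(T)$ (automatically $\overline{\Qq}$-regular, with branch points in $\mathbb{P}^1(\overline{\Qq})\subseteq\mathbb{P}^1(\kappa)$) whose inertia canonical conjugacy classes all lie in $\{C_1,C_1^{-1},\dots,C_r,C_r^{-1}\}$ --- the doubled-up list meets the product-one relation and stays inside $\bigcup_i\{C_i^a\}$ --- so that every specialization of $EF'/F'(T)$ ramifies, at each place of $F'/\kappa$, with inertia a power of some $C_i$; and one builds a second $\overline{\Qq}$-regular realization of $G$ carrying a branch point with inertia class $C$, whose specializations near that point, taken at places of $F'/\kappa$, ramify there with inertia coming from $C$. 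Since $C\notin\{C_i^a\}$, these latter realizations are not specializations of $EF'/F'(T)$, and letting the place vary yields infinitely many pairwise linearly disjoint ones --- the {\rm{(geometric non-$G$-parametricity)}} condition for $E\kappa(X)/\kappa(X)(T)$.

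Conjoining the two steps gives the statement. The real work is the second step, which is exactly what is carried out in \cite[\S5.2]{Leg15}; the main obstacle there is to check that the inertia-of-specialization dichotomy behind \cite[Theorem~4.2]{Leg16c} genuinely persists over $\kappa(X)$ and its finite extensions, and in particular that a specialization of the second extension near the branch point of inertia class $C$ can be arranged to ramify with inertia honestly in $C$ (rather than in some power of it that might accidentally fall into $\bigcup_i\{C_i^a\}$), at arbitrarily many independent places of $F'/\kappa$ so as to secure linear disjointness.
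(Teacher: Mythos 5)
Your proposal takes essentially the same route as the paper: the paper's entire proof consists of conjoining Lemma \ref{Reiter} (which gives condition (H2) since $G$ is not a cyclic $p$-group) with the function-field results of \cite[\S5.2]{Leg15}, exactly your two steps. Your additional discussion of the internal mechanics of \cite[\S5.2]{Leg15} --- places of $\kappa(X)$ replacing prime ideals of $O_{F'}$, the Riemann existence theorem over $\kappa$, the inertia dichotomy --- is elaboration the paper leaves entirely to the citation, and does not change the argument.
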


\bibliography{Biblio2}
\bibliographystyle{alpha}

\end{document}